\newlist{longenum}{enumerate}{5}
\setlist[longenum,1]{label=\alph*)}
\theoremstyle{plain}
    \newtheorem{lemma}{Lemma}[section]
    \newtheorem{theorem}{Theorem}
    \newtheorem{consequence}{Corollary}[section]
    \newtheorem{statement}{Proposition}[section]
\theoremstyle{definition}
      \newtheorem{remark}{Remark}[section]
\newcommand{\Ker}[1]{\mathrm{Ker} \, #1}
\newcommand{\trdeg}[1]{\mathrm{tr.deg.} \, #1}
\newcommand{\corank}[1]{\mathrm{corank} \, #1}
\newcommand{\ind}[1]{\mathrm{ind} \, #1}
\newcommand{\diff}[1]{\mathrm{d}  #1}
\newcommand{\diffFX}[2]{ \dfrac{\partial #1}{\partial #2} }
\newcommand{\K}{\mathbb{K}}
\newcommand{\Complex}{\mathbb{C}}
\newcommand{\Sing}{\mathrm{Sing}}
\newcommand{\T}{\mathrm{T}}
\newcommand{\g}{\mathfrak{g}}
\newcommand{\so}{\mathfrak{so}}
\newcommand{\ad}{\mathrm{ad}}
\newcommand{\F}{\mathcal{F}}
	\title{Generalized argument shift method and complete commutative subalgebras in polynomial Poisson algebras}
	\author{Anton Izosimov\footnote{Moscow State University and Higher School of Economics. E-mail: a.m.izosimov@gmail.com}}
	\date{}
\begin{document}
\maketitle
\abstract{The Mischenko-Fomenko argument shift method allows to construct commutative subalgebras in the symmetric algebra $S(\g)$ of a finite-dimensional Lie algebra $\g$. For a wide class of Lie algebras, these commutative subalgebras appear to be complete, i.e. they have maximal transcendence degree. However, for many algebras, Mischenko-Fomenko subalgebras are incomplete or even empty. In this case, we suggest a natural way how to extend Mischenko-Fomenko subalgebras, and give a completeness criterion for these extended subalgebras.}
\section{Introduction}
Let $\g$ be a finite-dimensional Lie algebra. For the sake of simplicity we shall assume that the ground field is $\Complex$, however everything works over an arbitrary field of characteristic zero. The symmetric algebra $S(\g)$ can be naturally identified with the algebra of polynomials on the dual space $\g^*$ and carries a natural Poisson bracket (\textit{Lie-Poisson bracket}) which is defined on linear functions by $\{\xi,\eta\} = [\xi,\eta]$ and is extended to all polynomials by the Leibnitz identity. \par
We will be interested in commutative subalgebras in $S(\g)$. Let $C \subset S(\g)$ be a commutative subalgebra. Then the maximal possible transcendence degree of $C$ is
$$
b(\g) =  \frac{1}{2}(\dim \g + \ind \g).
$$
If $\trdeg C = b(\g)$, then $C$ is called a \textit{complete commutative subalgebra}. Each complete commutative subalgebra in $S(\g)$ can be interpreted as an integrable system on the Poisson manifold $\g^*$.\par
One of the most universal methods for constructing ``large'' commutative subalgebras in $S(\g)$ is the so-called \textit{argument shift method}.
This method was introduced by Mischenko and Fomenko \cite{MF} as a generalization of the Manakov construction \cite{Manakov} for the Lie algebra $\so(n)$.\par
The argument shift method can be described as follows. Let $a \in \g^*$ be an arbitrary regular element. Then there exist $m = \ind \g$ analytic functionally-independent invariants of the coadjoint representation defined in a small neighborhood of $a$. Denote these invariants by $f_1, \dots, f_m$. For each $i = 1, \dots, m$ expand the function $f_i(a+\lambda x)$ in powers of $\lambda$:
$$
f_i(a+\lambda x) = \sum_{j=0}^\infty f_{ij}(x)\lambda^j
$$
where all functions $f_{ij}(x)$ are polynomials. Denote the algebra generated by all these polynomials by $\F_a$ . Then, as was proved by Mischenko and Fomenko, $\F_a$ is a commutative subalgebra in $S(\g)$. Moreover, if $\g$ is semisimple, then $\F_a$ is complete.\par
Note that our description of the argument shift method is slightly different from the original  description which required that the invariants $f_1, \dots, f_m$ are polynomial. The modification of the argument shift method presented here is due to Brailov (see Bolsinov \cite{BPC}).\par

The completeness criterion for sublagebras $\F_a$ was found by Bolsinov \cite{Bolsinov}. Namely, he proved that $\F_a$ is complete if and only if the set of singular elements $\Sing \subset \g^*$ has codimension at least $2$ (if the ground field $\K$ is not algebraically closed, one should consider the singular set in $\g^* \otimes \overline \K$ where $\overline \K$ is the algebraic closure of $\K$; see Bolsinov and Zhang \cite{BZ} for details). 
 \par

Although the family $\F_a$ is in general not complete, Mischenko and Fomenko stated the following conjecture: for each finite-dimensional Lie algebra $\g$, there exists a complete commutative subalgebra $C \subset S(\g)$. This conjecture was proved by Sadetov \cite{Sadetov}, see also Bolsinov \cite{bolsinov2012complete}. However, Sadetov's construction is essentially different from the argument shift method. In particular, Sadetov's subalgebras are commutative only with respect to the standard Poisson structure, while Mischenko-Fomenko subalgebras $\F_a$ have the following remarkable property: they are commutative with respect to two Poisson structures, one of which is standard, and the second one (the so-called \textit{frozen argument bracket}) is defined as follows. It is given on linear functions by $\{\xi,\eta\}_a =  \langle a, [\xi,\eta] \rangle$, where $\langle \, ,\rangle$ denotes the pairing between $\g$ and $\g^*$, and is extended to all polynomials by the Leibnitz identity.\par

Our aim is to show that when the Mischenko-Fomenko subalgebra $\F_a$ is incomplete, i.e. when the singular set $\Sing$ has codimension one, then there is a natural extension $\widetilde \F_a \supset \F_a$ which is also commutative with respect to both brackets $\{f,g\}$ and $\{f,g\}_a$, and to give a completeness criterion for $\widetilde \F_a$.\par
In their paper \cite{BZh}, Bolsinov and Zhang formulated the following ``generalized argument shift'' conjecture: {for each Lie algebra $\g$ there exists a subalgebra in $S(\g)$ which is commutative with respect to both brackets $\{f,g\}$ and $\{f,g\}_a$}. In this way, our note is a step towards the proof of this conjecture.

\section{Generalized argument shift method}
Let $\g$ be a finite-dimensional Lie algebra over $\Complex$, and let $x \in \g^*$. Let $$\g_x = \{  \xi \in \g \mid \ad^*_\xi(x) = 0\}$$ be the stabilizer of $x$ w.r.t. the coadjoint representation. The set $\Sing$ of singular elements in $\g^*$ is
$$
\Sing = \{x \in \g^* \mid \dim \g_x > \ind \g\}
$$
where $$\ind \g = \min_{x \in \g^*} \dim \g_x$$
is the \textit{index} of $\g$.\par
We consider the case when $\Sing$ has codimension one. Let $\Sing_0$ be the union of all irreducible components of $\Sing$ which have maximal dimension. The set $\Sing_0$ is the zero set of a certain homogenous polynomial $ p_\g(x)$. It is easy to see that $p_\g$ is a semi-invariant of the coadjoint representation. Following Ooms and Van den Bergh \cite{Ooms}, we call it the \textit{fundamental semi-invariant} of $\g$ (see also Joseph and Shafrir \cite{JS}). \par
More precisely, the fundamental semi-invariant is defined as follows. Let $t =  \dim \g - \ind \g$. Fix a basis in $\g$, and let $c_{ij}^k$ be the structure constants in this basis. Then an element $x \in \g^*$ is singular if and only if the rank of the matrix $\mathcal A_x = c_{ij}^kx_k$ is less than $t$, i.e. if Pfaffians of all principal $t \times t$ minors of $\mathcal A_x$ vanish. Define $p_\g$ as the greatest common divisor of all these Pfaffians. Clealry, the zero set of $p_\g$ coincides with $\Sing_0$. However, $p_\g$ is not necessarily the minimal polynomial which define $\Sing_0$.

 \par
Now we use the fundamental semi-invariant $p_\g$ to define the extended  Mischenko-Fomenko subalgebra.
Consider the polynomial $p_\g(a+\lambda x)$ and expand it in powers of $\lambda$:
$$
p_\g(a+\lambda x) =  \sum_{i=0}^{n} p_{i}(x)\lambda^i
$$
where $n = \deg p_\g$. Define the \textit{extended  Mischenko-Fomenko subalgebra} $\widetilde \F_a$ as a subalgebra in $S(\g)$ generated by all elements of the classical Mischenko-Fomenko subalgebra $\mathcal F_a$ and the polynomials $p_1, \dots, p_{n}$.
\begin{theorem}\label{thm1}
For each regular $a \in \g^*$, the {extended  Mischenko-Fomenko subalgebra} $\widetilde \F_a$ is commutative with respect to both brackets $\{f,g\}$ and $\{f,g\}_a$.
\end{theorem}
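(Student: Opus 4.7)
Since $\F_a$ is already bi-commutative with respect to both $\{\,,\,\}$ and $\{\,,\,\}_a$ by the classical Mischenko--Fomenko theorem, it suffices to show that (a) the generators $p_1,\dots,p_n$ commute pairwise and (b) each $p_i$ commutes with every element of $\F_a$, in both brackets. By the homogeneity of $p_\g$, the polynomials $p_i$ are (up to a reindexing $i\mapsto n-i$) precisely the Taylor coefficients of $p_\g(x+\lambda a)$ in $\lambda$, so I shall work with the generating function $\phi(x) \defeq p_\g(x + \lambda a)$ and take $\psi(x)$ to be either $p_\g(x + \mu a)$ (case (a)) or a shift $f(x + \mu a)$ of a coadjoint invariant $f$ (case (b)), with $\lambda, \mu$ formal parameters; it suffices to prove the polynomial identities $\{\phi,\psi\} \equiv 0 \equiv \{\phi,\psi\}_a$ in $\lambda, \mu$.

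The key input is a ``transversality observation'' that neutralises the character $\chi \in \g^*$ of the semi-invariant $p_\g$ (the unique $\chi$ with $\langle y, [\xi, dp_\g(y)]\rangle = \chi(\xi)\, p_\g(y)$ for all $y \in \g^*,\ \xi \in \g$): the directional derivative $D_\chi h \defeq \langle dh, \chi\rangle$ annihilates $p_\g$ and every coadjoint invariant $f$. Indeed, substituting $\xi = dp_\g(y)$ into the semi-invariance identity kills the left-hand side by antisymmetry of $[\,,\,]$ and yields $\chi(dp_\g(y)) \cdot p_\g(y) = 0$, so $D_\chi p_\g \equiv 0$; and for invariant $f$, the identity $\{f, p_\g\} = 0$ rewritten via the semi-invariance identity with $\xi = df(y)$ gives $\chi(df(y)) \cdot p_\g(y) = 0$, hence $D_\chi f \equiv 0$.

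With this observation in hand, the shift computation is short. Write $u = d\phi(x) = dp_\g(x+\lambda a)$, $v = d\psi(x)$, $P \defeq \langle x, [u,v]\rangle = \{\phi,\psi\}(x)$ and $Q \defeq \langle a, [u,v]\rangle = \{\phi,\psi\}_a(x)$. The semi-invariance identity applied at $y = x+\lambda a$ with $\xi = v$ yields
$$
P + \lambda Q = -\chi(v)\, p_\g(x+\lambda a),
$$
and either semi-invariance (case (a)) or invariance of $f$ (case (b)) applied at $y = x+\mu a$ with $\xi = u$ yields $P + \mu Q = \chi(u)\, p_\g(x+\mu a)$ or $P + \mu Q = 0$ respectively. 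By the transversality observation $\chi(u) = \chi(v) = 0$, so both right-hand sides vanish; subtracting the two equations gives $(\lambda - \mu)\, Q = 0$ in the polynomial ring $\Complex[x,\lambda,\mu]$, hence $Q \equiv 0$, and then $P \equiv 0$. Extracting coefficients of $\lambda^i \mu^j$ completes the proof.

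The main obstacle to overcome, as I see it, is precisely the transversality observation $D_\chi p_\g = 0 = D_\chi f$: it is what allows the semi-invariant $p_\g$ to be handled by the classical shift machinery as though it were an honest invariant, compensating for the otherwise destructive presence of the nontrivial character $\chi$. Once it is in place, the rest of the argument is a direct re-run of the classical Mischenko--Fomenko computation.
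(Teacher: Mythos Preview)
Your argument is correct and takes a genuinely different route from the paper's. The paper proves the stronger statement that shifts of \emph{any} two semi-invariants commute with respect to both brackets; its key step is the lemma that any two semi-invariants $f,g$ satisfy $\{f,g\}=0$, established by factoring into irreducible semi-invariants and a degree count, after which the identity $\chi_f(\diff g)=0$ is read off and fed into substitution tricks of the type you use. You bypass the factorization lemma entirely by observing directly that $\chi(\diff p_\g)=0$ (from antisymmetry of the bracket) and $\chi(\diff f)=0$ for Casimirs $f$ (since $\{p_\g,f\}=0$ trivially), which is exactly what is needed for the generators of $\widetilde\F_a$. Your two-equation trick $P+\lambda Q=0$, $P+\mu Q=0$ then finishes things cleanly. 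The trade-off: your argument is shorter and more transparent for the theorem as stated, while the paper's approach yields the more general result about arbitrary semi-invariants with no extra work.

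One minor point worth tightening: in case~(b) you take $\psi(x)=f(x+\mu a)$, but $\F_a$ is \emph{defined} via the expansion of $f(a+\lambda x)$. These two families of shifts generate the same algebra (for $\lambda\neq 0$ one has $f(a+\lambda x)=f_\lambda(x+\lambda^{-1}a)$ with $f_\lambda(y)\defeq f(\lambda y)$ again a Casimir), so there is no gap; alternatively, your computation goes through verbatim with $\psi(x)=f(a+\mu x)$, yielding $P+\lambda Q=0$ and $\mu P+Q=0$, hence $(1-\lambda\mu)Q=0$ and the same conclusion. Either way the argument stands.
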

\begin{remark}
This statement is, in fact, not new.
Firstly, it was proved by Arhangel'ski\v\i \,\,\cite{Arh} that if two semi-invariants $f,g$ commute with respect to the Lie-Poisson bracket, then their shifts, i.e. functions of the form $f(x+\lambda a)$, $g(x + \mu a)$ where $a \in \g^*$ is fixed, also commute. This statement easily implies that  the extended  Mischenko-Fomenko subalgebra $\widetilde \F_a$ is commutative with respect to the Lie-Poisson bracket. Moreover, the proof of Arhangel'ski\v\i \, can be easily modified to show that $\widetilde \F_a$  is commutative with respect to the frozen argument bracket as well (see Section \ref{pt1}). We also note that the assumption that $f$ and $g$ commute is in fact satisfied for arbitrary semi-invariants (see Ooms and Van den Bergh \cite{Ooms} and Section \ref{pt1}). \par
Secondly, Theorem \ref{thm1} can be viewed as a generalization of Proposition 7 of Bolsinov and Zhang \cite{BZh} which asserts that the functions $p_1, \dots, p_n$ are in involution with respect to both Lie-Poisson bracket and frozen argument bracket.\par
Thirdly, Theorem \ref{thm1} follows from the following general construction from the theory of compatible Poisson brackets. 
Let $\mathcal A$ and $\mathcal B$ be compatible Poisson brackets. Consider the family $\mathcal F$ generated by Casimir functions of all generic linear combinations of $\mathcal A$ and $\mathcal B$. This family is commutative with respect to both $\mathcal A$ and $\mathcal B$ (see Reiman and Semenov-Tyan-Shanskii \cite{Reiman}), however it may be incomplete or even empty. In this case, $\F$ can be extended by adding eigenvalues of the operator $\mathcal A \mathcal B^{-1}$ (which is still well-defined on a certain quotient space even if $\mathcal B$ is degenerate, see e.g. \cite{sbs}).
Denote the extended family by $\widetilde \F$.
\begin{statement}\label{prop1}
The family $\widetilde \F$ is commutative with respect to both brackets $\mathcal A$ and $\mathcal B$.
\end{statement}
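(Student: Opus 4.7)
The plan is to verify bi-commutativity of $\widetilde \F$ in two stages: first, between the eigenvalue functions $\lambda_1(x),\dots,\lambda_k(x)$ of the recursion operator $R=\mathcal A\mathcal B^{-1}$ and the pencil Casimirs in $\F$; second, among the eigenvalues themselves. Commutativity within $\F$ is already given by the Reiman--Semenov-Tian-Shansky theorem cited in the excerpt, so nothing remains to be done there.

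Before tackling the brackets I would make $R$ rigorous by passing to the quotient by the common kernel of a generic pencil member $\mathcal A-\mu\mathcal B$; on this quotient $\mathcal B$ becomes invertible, so $R$ is a bona fide operator. Its eigenvalues $\lambda_i(x)$ arise as roots of the reduced characteristic polynomial $\chi(\mu,x)=\det(\mathcal A_x-\mu\mathcal B_x)$, and they are locally smooth away from the discriminant locus, where all the computations below will take place.

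For the first stage, pick a parametric Casimir family $f(\mu,x)$ satisfying $(\mathcal A-\mu\mathcal B)\,df(\mu,\cdot)=0$, so that a fixed slice $f_0=f(\mu_0,\cdot)$ is a typical generator of $\F$. Implicit differentiation of $\chi(\lambda_i(x),x)=0$ expresses $d\lambda_i$ at $x$ in terms of an eigenvector of $R^*$ at $x$. Substituting this expression into $\{f_0,\lambda_i\}_{\mathcal A}$ and $\{f_0,\lambda_i\}_{\mathcal B}$ and repeatedly using the identity $\mathcal A\,df_0=\mu_0\mathcal B\,df_0$ should collapse both brackets to zero. For the second stage I would invoke the fact that compatibility of $\mathcal A$ and $\mathcal B$ is equivalent to the vanishing of the Nijenhuis torsion of $R$. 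At a regular point the differentials $d\lambda_i$ lie in distinct eigenspaces of $R^*$, and a Haantjes-type argument standard in the theory of Nijenhuis operators then forces $\{\lambda_i,\lambda_j\}_{\mathcal A}=\{\lambda_i,\lambda_j\}_{\mathcal B}=0$ for $i\ne j$; repeated eigenvalues are handled by continuity.

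The main obstacle I expect is the implicit-differentiation step. Carrying out the bracket computation cleanly requires a good local normal form for the pencil at $x$ (presumably the Kronecker--Jordan normal form), and the endpoint generators of $\F$, namely the Casimirs of $\mathcal A$ or $\mathcal B$ themselves, correspond to $\mu=\infty$ or $\mu=0$ in the pencil and must be treated separately. A secondary subtlety is to make sure that passing to the quotient commutes with the Poisson-bracket calculations, so that differentials of objects defined on the quotient can be lifted unambiguously to the original space.
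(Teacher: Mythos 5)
Your strategy is genuinely different from the paper's. The paper never writes out a self-contained proof of Proposition \ref{prop1} (it is presented as folklore), but the machinery it develops in Sections \ref{shiftsSec} and \ref{LA} is the intended argument, and it is purely pointwise linear algebra on the pencil $P_\lambda = P_0 - \lambda P_\infty$ of skew-symmetric forms on the cotangent space: one shows (i) $\diff \lambda_i(x) \in \Ker P_{\lambda_i(x)}$ (Proposition \ref{preFundId}), (ii) kernels of distinct members of the pencil are mutually orthogonal with respect to \emph{every} member of the pencil, and (iii) $L = \diff \F(x)$ together with one vector chosen from each singular kernel spans a subspace isotropic for every $P_\lambda$ (Propositions \ref{LA2} and \ref{LA4}). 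Both of your ``stages'' then follow at once, with no Nijenhuis machinery: $P_0(\diff \lambda_i, \diff \lambda_j) = P_\infty(\diff \lambda_i, \diff \lambda_j) = 0$ for $i \neq j$ by kernel orthogonality (and trivially for $i = j$ by skew-symmetry), while $P_\lambda(\diff \lambda_i, L) = 0$ because $\Ker P_{\lambda_i} \subset L^\bot$.

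Measured against this, your proposal has two gaps. First, stage 1 is not actually carried out: ``substituting \dots should collapse both brackets to zero'' is the entire content of the step, and the clean statement you need to extract from the implicit differentiation is precisely (i) above; once you have it, you should use it directly rather than feed it back into a bracket computation. Second, and more seriously, stage 2 leans on the Nijenhuis torsion of $R = \mathcal A \mathcal B^{-1}$, but in the situation of interest $\mathcal B$ is degenerate, $R$ exists only as an operator on the quotient $L^\bot / L$, and the Nijenhuis torsion of such an object is not defined without substantial additional work --- the ``secondary subtlety'' you defer is in fact the main difficulty of your route. (Also, compatibility of $\mathcal A$ and $\mathcal B$ is not \emph{equivalent} to vanishing of the torsion of $R$; only the implication you need holds, and only once $\mathcal B^{-1}$ has been made sense of.) I would replace stage 2 entirely by the orthogonality-of-kernels argument, which also subsumes stage 1.
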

If we apply this construction to the Lie-Poisson bracket and the frozen argument bracket, we get Theorem \ref{thm1}.\par
Although we were not able to find the statement of Proposition \ref{prop1} in the literature,  we believe that it is well known to experts in the field.  See \cite{Magri, GD, Reiman, MM} where different constructions of integrable systems related to compatible Poisson brackets are discussed. Also note that the relation between the argument shift method and compatible Poisson brackets was probably first mentioned by Meshcheryakov \cite{Meshcheryakov}. 

\end{remark}
\section{Completeness criterion}
Let $\Sing_{sr} \subset \Sing_0$ be the subset of $\Sing_0$ which consists of subregular elements, i.e.
$$
\Sing_{sr} = \{x \in \Sing_0 \mid \dim \g_x = \ind \g + 2\}.
$$
It is clear that $\Sing_{sr}$ is open in $\Sing_0$, however it is not necessarily dense and may be empty. Let also $\Sing_{sm} \subset \Sing_0$ be the set of points where $\Sing_0$ is smooth. This set is open and dense in $\Sing_0$.\par
Denote the two-dimensional non-Abelian Lie algebra by $\mathfrak{b}_2$, and the $2n+1$-dimensional Heisenberg algebra by  $\mathfrak{h}_{2n+1}$.
\begin{statement}\label{class}
Let $x \in \Sing_{sr} \cap \Sing_{sm}$. Then $\g_x$ is isomorphic to one of the following Lie algebras:
\begin{enumerate}
\item $\mathfrak{b}_2 \oplus \mbox{Abelian Lie algebra of dimension } \ind \g$;
\item $\mathfrak{h}_3  \oplus \mbox{Abelian Lie algebra of dimension } \ind \g - 1$;
\item Abelian Lie algebra of dimension $\ind \g + 2$.
\end{enumerate}
\begin{proof}
Corollary 2.1 of \cite{deral} implies that the derived algebra of $\g_x$ is at most one-dimensional. It is easy to see that any Lie algebra with this property is either Abelian, or isomorphic to one of the following:
\begin{itemize}
\item $\mathfrak{b}_2 \oplus \mbox{Abelian}$; 
\item $\mathfrak{h}_{2n+1}  \oplus \mbox{Abelian}$;
\end{itemize}
Now, using that $\dim \g_x = \ind \g+2$ and the inequality $\ind \g_x \geq \ind \g$, we obtain the above list.
\end{proof}
 
\end{statement}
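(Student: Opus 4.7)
The plan is to proceed in three steps: extract structural information about $\g_x$ from the geometric hypotheses, classify abstract Lie algebras compatible with that structure, and then prune the resulting list using numerical invariants.

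First, I would aim to show that $\dim[\g_x,\g_x]\le 1$. Intuitively, at a smooth subregular point of $\Sing_0$ the corank of the Kirillov form jumps from $\ind\g$ to $\ind\g+2$ as $x$ is approached along a single normal direction, and this rigidity should force the non-Abelian part of $\g_x$ to be extremely small. Rather than reprove this from scratch, I would invoke Corollary~2.1 of \cite{deral}, which packages exactly this geometric/linear-algebraic fact.

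Second, I would classify finite-dimensional Lie algebras $\mathfrak{a}$ over $\Complex$ satisfying $\dim[\mathfrak{a},\mathfrak{a}]\le 1$. If the derived subalgebra vanishes, $\mathfrak{a}$ is Abelian. Otherwise let $z$ span $[\mathfrak{a},\mathfrak{a}]$, so that $\ad_z:\mathfrak{a}\to\Complex z$ is a linear functional. If $z\in\centrum(\mathfrak{a})$, the bracket descends to a skew form $\omega$ on $\mathfrak{a}/\Complex z$; a Darboux decomposition of $\omega$ then identifies $\mathfrak{a}$ with $\mathfrak{h}_{2n+1}\oplus$~Abelian. If $z\notin\centrum(\mathfrak{a})$, pick $y$ with $[y,z]=z$; the subspace $K=\ker\ad_z$ is a codimension-one subalgebra containing $z$, and applying the Jacobi identity to $\{y,u,v\}$ for $u,v\in K$ forces $[u,v]=0$, so $K$ is Abelian. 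A small adjustment produces inside $K$ an Abelian complement of $\Complex z$ on which $\ad_y$ vanishes, yielding $\mathfrak{a}\cong\mathfrak{b}_2\oplus$~Abelian.

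Third, I would combine $\dim\g_x=\ind\g+2$ with the well-known inequality $\ind\g_x\ge\ind\g$ to prune the list. For $\g_x\cong\mathfrak{h}_{2n+1}\oplus\Complex^k$ one has $\ind\g_x=k+1$ and $\dim\g_x=2n+1+k$; the inequalities together force $n=1$ and $k=\ind\g-1$, giving case (b). For $\g_x\cong\mathfrak{b}_2\oplus\Complex^k$, $\ind\g_x=k$ and $\dim\g_x=k+2$ give $k=\ind\g$, yielding (a). The Abelian case has $\dim\g_x=\ind\g_x=\ind\g+2$, giving (c).

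The main obstacle is the first step: extracting the bound $\dim[\g_x,\g_x]\le 1$ from the smoothness and subregularity hypotheses. Once that input is available, steps two and three are essentially linear algebra and bookkeeping, and in the write-up I would simply cite \cite{deral} rather than reproduce its argument.
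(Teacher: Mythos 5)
Your proposal is correct and follows essentially the same route as the paper: cite Corollary 2.1 of \cite{deral} for $\dim[\g_x,\g_x]\le 1$, classify Lie algebras with at most one-dimensional derived algebra (which the paper leaves as ``easy to see'' and you work out explicitly via the central/non-central dichotomy for the generator of the derived algebra), and then prune the list using $\dim\g_x=\ind\g+2$ together with the Vinberg inequality $\ind\g_x\ge\ind\g$. The numerical bookkeeping in your third step is correct and matches the paper's conclusion.
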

\begin{remark}
The inequality $\ind \g_x \geq \ind \g$ is true for any Lie algebra $\g$ and any $x \in \g^*$. It is known as the Vinberg inequality. See Panyushev \cite{Pan}.
\end{remark}
Consider the set
$$\Sing_{\mathfrak b} = \{ x \in \Sing_{sr} \cap \Sing_{sm} \mid \g_x \simeq \mathfrak b_2 \oplus \mbox{Abelian}\} \subset \Sing_{sr} \cap \Sing_{sm} \subset \Sing_0.$$
It is easy to see that $\Sing_{\mathfrak b} $ is open in $\Sing_{sr} \cap \Sing_{sm}$ and thus in $\Sing_0$. \par
\begin{theorem}\label{thm2}
The extended  Mischenko-Fomenko subalgebra $\widetilde \F_a$ is complete if and only if the set $ \Sing_{\mathfrak b}$ is dense in $\Sing_0$.
\end{theorem}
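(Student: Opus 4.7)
The plan is to reformulate completeness of $\widetilde{\F}_a$ as a rank condition at a generic point of $\g^*$ and to analyze it via the Jordan--Kronecker decomposition of the pencil $\Pi_x(\lambda)=\mathcal{A}_x-\lambda \mathcal{B}_a$, where $\mathcal{A}$ is the Lie--Poisson bracket and $\mathcal{B}_a$ the frozen-argument bracket. First, $\widetilde{\F}_a$ is complete iff at a generic $x\in\g^*$ the span of differentials $d\widetilde{\F}_a(x)\subset\g$ has dimension $b(\g)$. I restrict to a Zariski open $U\subset\g^*$ on which the affine line $L_x=\{a+\lambda x:\lambda\in\Complex\}$ meets $\Sing_0$ transversally in the $n=\deg p_\g$ simple roots $\lambda_1(x),\ldots,\lambda_n(x)$ of $p_\g(a+\lambda x)$, all of which lie in the smooth locus $\Sing_{sm}$. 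When $\Sing_\mathfrak{b}$ is dense in $\Sing_0$, the set $\Sing_{sr}\cap\Sing_{sm}$ is automatically open-dense, so each intersection point falls under Proposition \ref{class}.

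Next, by the Jordan--Kronecker theorem applied to the pair of skew forms $(\mathcal{A}_x,\mathcal{B}_a)$ on $\g$, the degeneration values of $\Pi_x$ are exactly $\lambda_1(x),\ldots,\lambda_n(x)$, since $\mathcal{A}_x-\lambda\mathcal{B}_a$ drops in rank at $\lambda_0$ iff $a+\lambda_0 x\in\Sing$. Bolsinov's completeness analysis \cite{Bolsinov} shows that $d\F_a(x)$ spans a Lagrangian on the Kronecker part of $\g$, so the deficit $b(\g)-\dim d\F_a(x)$ splits as a sum of local Jordan deficits at each $\lambda_k$. On the other hand, the factorization $p_\g(a+\lambda x)=p_\g(x)\prod_k(\lambda-\lambda_k(x))$ identifies $p_1,\ldots,p_n$ with (scaled) elementary symmetric functions of the $\lambda_k$, so the $dp_i(x)$ span the same subspace of $\g$ as $\{d\lambda_k(x)\}_{k=1}^n\cup\{dp_\g(x)\}$; each $d\lambda_k(x)$ is a single covector attached to the $k$th Jordan value.

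The core of the argument is a local analysis at each $\lambda_k$ matching (a) the Jordan block structure of $\Pi_x$ at $\lambda_k$ and (b) the position of $d\lambda_k(x)$ inside the Jordan/Kronecker decomposition, to the Lie-algebra type of $\g_{a+\lambda_k x}$ given by Proposition \ref{class}. The expected outcomes are:
\begin{itemize}
\item $\mathfrak{b}_2\oplus$ Abelian stabilizer: the nonabelian bracket produces a single $2$-dimensional Jordan block at $\lambda_k$ with local deficit $1$, and $d\lambda_k(x)$ projects nontrivially onto it, filling the deficit.
\item $\mathfrak{h}_3\oplus$ Abelian stabilizer: the centrality of the nilpotent bracket aligns $d\lambda_k(x)$ with the Kronecker factor coming from the center of $\mathfrak{h}_3$, so the local deficit $1$ is not filled.
\item Abelian stabilizer of dimension $\ind\g+2$: the Jordan part splits into two $2$-dimensional blocks with total deficit $2$, which the single covector $d\lambda_k(x)$ cannot fill.
\end{itemize}
Combining, $\widetilde{\F}_a$ is complete for generic $x$ iff every intersection point $a+\lambda_k(x)x$ has stabilizer of the first type, i.e.\ lies in $\Sing_\mathfrak b$; a standard genericity argument, using that the lines $L_x$ sweep out $\Sing_0$ densely as $x$ varies in a Zariski open set, translates this into density of $\Sing_\mathfrak b$ in $\Sing_0$.

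The main obstacle is exactly this local case analysis. The most delicate check is the $\mathfrak{h}_3$ case, where the Jordan block of $\Pi_x$ at $\lambda_k$ has the same dimension as in the $\mathfrak{b}_2$ case but the covector $d\lambda_k(x)$ is aligned with the Kronecker factor and so fails to complete the local Lagrangian; the Abelian case similarly requires a careful count of the number of blocks into which the Jordan part decomposes. Once these linear-algebra identifications are in place, the rest of the proof is a routine combination of the Jordan--Kronecker machinery with the symmetric-function interpretation of the polynomials $p_1,\ldots,p_n$.
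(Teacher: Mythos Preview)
Your strategy coincides with the paper's: reduce completeness to the statement that $d\widetilde{\F}_a(x)$ is maximal isotropic for the pencil $(\mathcal A_x,\mathcal A_a)$, invoke Bolsinov's result that $d\F_a(x)$ equals the Kronecker subspace $L=\sum_{\lambda\notin\Lambda}\Ker P_\lambda$, identify the shifts $p_i$ with symmetric functions of the roots $\lambda_k(x)$ so that the extra differentials are exactly the $d\lambda_k(x)\in\g_k:=\g_{x-\lambda_k a}$, and then run a case analysis over the three stabilizer types of Proposition~\ref{class}.

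The genuine gap is the lemma that makes the case analysis work. The paper proves (Proposition~\ref{fundIdP}) that for all $\xi,\eta\in\g_k$ one has
\[
[\xi,\eta]=\langle a,[\xi,\eta]\rangle\, d\lambda_k(x).
\]
This identity is what your ``expected outcomes'' are implicitly relying on. First, it shows that $\mathcal A_a|_{\g_k}=0$ iff $\g_k$ is Abelian, so by the diagonalizability criterion the recursion operator acquires a Jordan block of size $\geq 4$ at $\lambda_k$ precisely in the Abelian case, while in the $\mathfrak b_2$ and $\mathfrak h_3$ cases one gets a single $2\times 2$ block. Second, when $\g_k$ is non-Abelian it forces $d\lambda_k(x)$ to be a nonzero element of $[\g_k,\g_k]$. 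Since item~5 of Proposition~\ref{LA2} together with the same identity gives $\g_k\cap L\subset\mathcal Z(\g_k)$, one immediately separates $\mathfrak b_2\oplus\mbox{Abelian}$ (where $[\g_k,\g_k]\cap\mathcal Z(\g_k)=0$, hence $d\lambda_k\notin L$) from $\mathfrak h_3\oplus\mbox{Abelian}$ (where $[\g_k,\g_k]\subset\mathcal Z(\g_k)=\g_k\cap L$, hence $d\lambda_k\in L$). Without this identity your assertions about where $d\lambda_k$ ``projects'' or ``aligns'' are bare claims; this is the piece you still need to supply.

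Two smaller corrections. In the Abelian case there is \emph{one} Jordan--Kronecker block of size $\geq 4$ at $\lambda_k$, not two $2\times2$ blocks: the $\lambda_k$-eigenspace of $R$ is $\g_k/(\g_k\cap L)$, which is $2$-dimensional, so exactly one JK-block appears, and it has size $>2$ because $\mathcal A_a|_{\g_k}=0$ forces non-diagonalizability. Your conclusion (one covector cannot fill the deficit) survives, but the picture should be fixed. Also, $p_\g$ need not be squarefree, so the generic line meets $\Sing_0$ in $d=\sum\deg p_j$ distinct points, each with a fixed multiplicity $s_j$, rather than in $n=\deg p_\g$ simple points; the span of the $dp_i$ still equals the span of the $d\lambda_k$, but your transversality assumption should be stated accordingly.
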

\begin{consequence}
Assume that for each irreducible component $S_i$ of the variety $\Sing_0$ there exists at least one $x \in S_i$ such that $\g_x \simeq \mathfrak b_2 \oplus \Complex^{\ind \g}$. Then the extended  Mischenko-Fomenko subalgebra $\widetilde \F_a$ is complete. In particular, if $\Sing_0$ is irreducible, and there exists at least one $x \in \Sing_0$ such that $\g_x \simeq \mathfrak b_2 \oplus \Complex^{\ind \g}$, then $\widetilde \F_a$ is complete.
\end{consequence}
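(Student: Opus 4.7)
My plan is to derive the corollary from Theorem \ref{thm2}: completeness of $\widetilde \F_a$ is equivalent to density of $\Sing_{\mathfrak b}$ in $\Sing_0$, so all I need to check is that density. Since $\Sing_{\mathfrak b}$ is open in $\Sing_0$ and $\Sing_0 = \bigcup_i S_i$ with each $S_i$ irreducible, it suffices to show $\Sing_{\mathfrak b} \cap S_i \neq \emptyset$ for every component $S_i$: a non-empty open subset of an irreducible variety is automatically dense in it, and density in every component yields density in $\Sing_0$.

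Fix $S_i$ and let $x_0 \in S_i$ be a point supplied by the hypothesis, so $\g_{x_0} \simeq \mathfrak b_2 \oplus \Complex^{\ind \g}$. From $\dim \g_{x_0} = \ind \g + 2$ I immediately get $x_0 \in \Sing_{sr}$. The main obstacle I anticipate is that nothing forces $x_0$ to lie in $\Sing_{sm}$, so $x_0$ itself need not belong to $\Sing_{\mathfrak b}$. To get around this I plan to perturb $x_0$ within $S_i$ to a nearby smooth point of $\Sing_0$ without losing the stabilizer type, and the key technical fact will be openness of the $\mathfrak b_2$-type condition on $\Sing_{sr}$.

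To establish this openness, observe that on $\Sing_{sr}$ the stabilizers $\g_x$ have fixed dimension $\ind \g + 2$ and form an algebraic family of Lie subalgebras of $\g$ (they are the kernels of the polynomially varying matrix $\mathcal A_x$ of locally constant rank). In this family the $2$-step nilpotency condition $[[\g_x,\g_x],\g_x]=0$ is closed, so the complementary condition $[\g_x,\g_x] \not\subset \centrum(\g_x)$ is open. It holds at $x_0$, since in $\mathfrak b_2 \oplus \Complex^{\ind \g}$ the generator of the derived subalgebra of $\mathfrak b_2$ is not central. Hence there exists an open neighbourhood $V$ of $x_0$ in $S_i$ on which $[\g_x,\g_x] \not\subset \centrum(\g_x)$ throughout. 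Because $\Sing_{sm}$ is open and dense in $\Sing_0$, it meets $S_i$ densely and hence meets $V$. For any $x \in V \cap \Sing_{sm}$, Proposition \ref{class} applies and rules out the two $2$-step nilpotent cases, forcing $\g_x \simeq \mathfrak b_2 \oplus \Complex^{\ind \g}$. Thus $x \in \Sing_{\mathfrak b} \cap S_i$, completing the reduction. The final sentence of the corollary is just the case of a single irreducible component.
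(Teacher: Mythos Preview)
Your argument is correct. The paper does not give an explicit proof of this corollary; it is stated as an immediate consequence of Theorem~\ref{thm2} together with the previously noted fact that $\Sing_{\mathfrak b}$ is open in $\Sing_0$. Your write-up supplies exactly the details this requires: reducing density to non-empty intersection with each irreducible component, and then producing a point of $\Sing_{\mathfrak b}\cap S_i$ from the given $x_0$.

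You also notice and handle a subtlety that the paper glosses over: the hypothesis only gives $x_0\in\Sing_{sr}$, not $x_0\in\Sing_{sm}$, so $x_0$ need not itself lie in $\Sing_{\mathfrak b}$. Your remedy---showing that the failure of $2$-step nilpotency is an open condition on $\Sing_{sr}$ (via the regular map $x\mapsto\g_x$ into the Grassmannian, well-defined since $\mathrm{rank}\,\mathcal A_x$ is constant there), intersecting the resulting open neighbourhood with the dense set $\Sing_{sm}\cap S_i$, and then invoking Proposition~\ref{class} to pin down the stabilizer type---is clean and correct. This is presumably what the author had in mind, made fully explicit.
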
\newpage
\begin{remark}
Theorem \ref{thm2} can also be formulated as follows.  For each irreducible component $S_i$ of the variety $\Sing_0$, there exists an open subset $\widetilde S_i$ such that all elements of this subset have isomorphic stabilizers. The extended  Mischenko-Fomenko subalgebra $\widetilde \F_a$ is complete if and only if $\g_x \simeq \mathfrak b_2 \oplus \Complex^{\ind \g}$ for each $i$ and each $x \in \widetilde S_i$.
\end{remark}
We also note that Theorem \ref{thm2} can be generalized to the case of arbitrary compatible Poisson brackets. 
 \section{Proof of Theorem 1}\label{pt1}
 Though the statement of Theorem 1 follows from from considerations of Sections \ref{shiftsSec} and \ref{LA}, we give an independent algebraic proof. As a matter of fact, we prove a stronger statement: if $\g$ is a complex Lie algebra, and $a\in \g^*$, then for any two semi-invariants $f,g \in S(\g)$ and any $\lambda, \mu \in \Complex$, 
 \begin{align*}\{f(a+\lambda x),g(a+\mu x) \}=0, \quad \{f(a+\lambda x),g(a+\mu x) \}_a = 0.\end{align*}The proof given below follows the ideas of Arhangel'ski\v\i \,\,\cite{Arh}.\par\smallskip
  Recall that $f \in S(\g)$ is called a semi-invariant of $\g$ if there exists a character $\chi_f \colon \g \to \Complex$ such that
 $$
 \{f,g\} = \chi_f(\diff g)f
 $$
 for any $g \in S(\g)$.   \begin{statement}
Assume that $\{f,g\}$ is divisible by $f$ for any $g \in S(\g)$. Then $f$ is a semi-invariant.
 \end{statement}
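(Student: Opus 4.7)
The plan is to reduce the hypothesis to linear functions, obtain the character via a degree argument, and then check the character property using Jacobi.

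First I would apply the assumption to linear functions $\xi \in \g \subset S(\g)$: by hypothesis, $\{f,\xi\} = h_\xi f$ for some polynomial $h_\xi \in S(\g)$. The key observation is a degree count. For $\xi$ linear, the Lie--Poisson bracket $\{f,\xi\}$ has degree at most $\deg f$ (since in coordinates the bracket with a linear function is a first-order differential operator that preserves the total degree of the other argument). From $\deg(h_\xi) + \deg f \leq \deg f$ we conclude $h_\xi$ is a scalar, and we define $\chi_f(\xi) \defeq h_\xi \in \Complex$.

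Next I would check that $\chi_f \colon \g \to \Complex$ is a character. Linearity of $\chi_f$ in $\xi$ is immediate from bilinearity of the Poisson bracket and the fact that $f \neq 0$. To see that $\chi_f$ vanishes on $[\g,\g]$, I would apply the Jacobi identity to $f, \xi, \eta$:
\[
\{f,[\xi,\eta]\} = \{\xi, \{f,\eta\}\} - \{\eta,\{f,\xi\}\}.
\]
Substituting $\{f,\eta\} = \chi_f(\eta) f$ and $\{f,\xi\} = \chi_f(\xi) f$, both terms on the right become $\pm \chi_f(\xi)\chi_f(\eta)\{\xi\text{ or }\eta, f\}$ with opposite signs, and they cancel. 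Comparing with $\{f,[\xi,\eta]\} = \chi_f([\xi,\eta])f$ yields $\chi_f([\xi,\eta]) = 0$.

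Finally, to recover the formula $\{f,g\} = \chi_f(\diff g)\, f$ for an arbitrary polynomial $g \in S(\g)$, I would extend by the Leibniz rule. Both sides are derivations in $g$: the left by definition of the Poisson bracket, the right because $\diff(gh) = h\,\diff g + g\,\diff h$, so
\[
\chi_f(\diff(gh))\, f = h\,\chi_f(\diff g)\, f + g\,\chi_f(\diff h)\, f.
\]
Since the identity holds on a generating set (linear functions in $\g$), it holds on all of $S(\g)$. No step is really an obstacle here; the only point that requires care is the degree count that forces $h_\xi$ to be constant, which is what turns the divisibility hypothesis into the genuine semi-invariance property.
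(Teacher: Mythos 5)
Your proposal is correct and follows essentially the same route as the paper: the degree count forcing $\{f,\xi\}=c\,f$ for linear $\xi$, the Jacobi identity to show $\chi_f$ vanishes on $[\g,\g]$, and the extension to all of $S(\g)$ via the derivation property (which the paper phrases as the coordinate formula $\{f,g\}=\sum_i\{f,x_i\}\,\partial g/\partial x_i$). No substantive difference.
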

 \begin{proof}
 Let $x_1, \dots, x_n$ be a basis in $\g$. Then 
\begin{align}\label{three}\{f,g\} = \sum_i\{f,x_i\}\diffFX{g}{x_i}. \end{align}
  Since  $\{f,x_i\}$ is divisible by $f$ and has the same degree as $f$,  there exists $c_i \in \Complex$ such that $ \{f,x_i\} = c_i f$. Define a linear function  $\chi_f \colon \g \to \Complex$ by setting  $
  \chi_f(x_i) = c_i.
  $
  Then \eqref{three} can be rewritten as
   $$
 \{f,g\} = \chi_f(\diff g)f.
 $$
  Further,
  $$
  \{f,\{x_i,x_j\}\} =   \{\{f,x_i\},x_j\} +   \{x_i,\{f,x_j\}\} = c_i\{f,x_j\} + c_j\{x_i,f\} = c^ic^jf-c^jc^if = 0,
  $$
  so
  $$
  \chi_f(\{x_i,x_j\}) = \chi_f([x_i,x_j]) = 0,
  $$
  i.e. $\chi_f$ is a character, q.e.d.
 \end{proof}
 \begin{statement}
Any semi-invariant $f$ is a product of irreducible semi-invariants.  
 \end{statement}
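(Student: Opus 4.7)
The plan is to exploit that $S(\g)$ is a unique factorization domain and to reduce the statement to the previous proposition: $f$ is a semi-invariant iff $\{f,g\}$ is divisible by $f$ for all $g \in S(\g)$. So I would start by writing $f = f_1^{n_1} \cdots f_k^{n_k}$, the factorization of $f$ into pairwise non-associate irreducibles $f_i$, and aim to prove that each $f_i$ is itself a semi-invariant. The desired conclusion then follows immediately.

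By the Leibniz rule applied to this factorization,
\[
\{f,g\} = \sum_{i=1}^k n_i f_i^{n_i-1} \{f_i,g\}\prod_{j\neq i} f_j^{n_j}.
\]
On the other hand, since $f$ is a semi-invariant, $\{f,g\} = \chi_f(\diff g)\,f$. Dividing both sides by $f = \prod_i f_i^{n_i}$ and then clearing denominators by multiplying back by $\prod_j f_j$, one gets, in $S(\g)$, the identity
\[
\chi_f(\diff g)\prod_{j} f_j = \sum_{i=1}^k n_i \{f_i,g\}\prod_{j\neq i} f_j.
\]
Now I would reduce modulo $f_i$ for a fixed $i$: the left-hand side is divisible by $f_i$, as is every summand on the right with index $j\neq i$. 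Therefore $n_i \{f_i,g\} \prod_{j\neq i} f_j$ must be divisible by $f_i$. Since $f_i$ is irreducible and coprime to each $f_j$ for $j\neq i$ (they are non-associate irreducibles in a UFD), and $n_i \neq 0$ in the base field $\Complex$, we conclude $f_i \mid \{f_i,g\}$. As this holds for every $g \in S(\g)$, the previous proposition shows that $f_i$ is a semi-invariant.

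There is no significant obstacle: the whole argument is a standard UFD manipulation, and the previous proposition does all of the Lie-theoretic work. The only point that needs care is ensuring the cancellation argument is valid, i.e.\ that no unwanted factors arise when dividing by $f$, but this is handled by the fact that $S(\g)$ is a polynomial ring and by the coprimality of distinct irreducible factors. Note that the characters $\chi_{f_i}$ of the irreducible factors need not be recorded explicitly for this statement; one can observe afterwards that $\chi_f = \sum_i n_i \chi_{f_i}$ as a consequence of the Leibniz identity, though this is not needed here.
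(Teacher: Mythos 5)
Your proposal is correct and follows essentially the same route as the paper: factor $f$ into distinct irreducibles, expand $\{f,g\}$ by the Leibniz rule, compare with $\chi_f(\diff g)f$ to deduce $f_i \mid \{f_i,g\}$, and invoke the preceding proposition. You merely spell out the divisibility/coprimality step that the paper compresses into ``comparing \eqref{one} and \eqref{two}.''
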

 \begin{proof}
 Let $f = f_1^{k_1}  \ldots  f_m^{k_m}$ where $f_1, \dots, f_m$ are irreducible and distinct. Then 
 \begin{align}\label{one}
 \{f,g\} = \{ f_1^{k_1}  \ldots  f_n^{k_m}, g\} = \sum_{i=1}^m k_i f_1^{k_1}  \ldots  f_i^{k_i - 1}\ldots  f_m^{k_m}\{f_i,g\}.
\end{align}
 On the other hand,
\begin{align}\label{two}
 \{f,g\} = \chi_f(\diff g)f =  \chi_f(\diff g) f_1^{k_1}  \ldots  f_m^{k_m}.
\end{align}
 Comparing \eqref{one} and \eqref{two}, we conclude that $\{f_i,g\}$ is divisible by $f_i$, hence $f_i$ is a semi-invariant.
  \end{proof}
  \begin{statement}
  Let $f,g$ be two semi-invariants. Then $\{f,g\} = 0$.
  \end{statement}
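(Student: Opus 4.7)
The plan is to reduce, via the preceding proposition, to the case in which both $f$ and $g$ are irreducible semi-invariants, and then to exploit the fact that an irreducible polynomial that divides a product $fg$ with $f,g$ coprime must divide one of them.

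Concretely, write $f = \prod f_i^{k_i}$ and $g = \prod g_j^{l_j}$ as products of irreducible semi-invariants (using the previous proposition). Since $\{-,-\}$ is a derivation in each argument, it suffices to prove $\{f_i, g_j\} = 0$ for each pair. So from now on I assume $f, g$ are irreducible semi-invariants with characters $\chi_f, \chi_g \in \g^*$. There are two cases.

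\emph{Case 1: $f$ and $g$ are proportional}, i.e. $g = c f$ for some $c \in \Complex$. Then $\{f,g\} = c \{f,f\} = 0$ since the Poisson bracket is skew.

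\emph{Case 2: $f$ and $g$ are not proportional.} Since they are irreducible and not associated, they are coprime in $S(\g)$. By the semi-invariance of $f$ and of $g$,
\[
\{f,g\} = \chi_f(\diff g)\, f, \qquad \{g,f\} = \chi_g(\diff f)\, g,
\]
and skew-symmetry of $\{-,-\}$ gives $\chi_f(\diff g)\, f = -\,\chi_g(\diff f)\, g$ as elements of $S(\g)$. Since $f$ and $g$ are coprime, $f$ must divide $\chi_g(\diff f)$. But writing $\diff f = \sum_i (\partial f / \partial x_i)\, x_i$ in coordinates, each $\partial f / \partial x_i$ has degree $\deg f - 1$, hence $\chi_g(\diff f)$ has degree strictly less than $\deg f$. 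The only multiple of $f$ of degree less than $\deg f$ is $0$, so $\chi_g(\diff f) = 0$ and therefore $\{f,g\} = 0$.

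The only non-routine point is the coprimality step of Case 2, and even that is completely standard; the degree estimate and unique factorization in the polynomial algebra $S(\g)$ finish the argument with no serious obstacle. All of this is parallel in spirit to Arhangel'ski\u{\i}'s treatment cited in the paper.
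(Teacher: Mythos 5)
Your proof is correct and follows essentially the same route as the paper: reduce to distinct irreducible semi-invariants via the factorization proposition, then combine the divisibility of $\{f,g\}$ by both $f$ and $g$ (coprimality of non-associated irreducibles) with a degree count. The paper runs the degree estimate on $h=\{f,g\}$ itself ($h$ divisible by $fg$ but $\deg h \leq \deg f + \deg g - 1$) rather than on $\chi_g(\diff f)$, but the two formulations are equivalent.
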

  \begin{proof}
  Clearly, it suffices to prove the proposition for irreducible distinct  $f$ and $g$. Let $h = \{f,g\}$. Then $h$ is divisible by both $f$ and $g$, and hence by $fg$. On the other hand, 
  $$
  \deg h \leq \deg f + \deg  g- 1,
  $$
  therefore $h=0$.
    \end{proof}
    \begin{statement}\label{shift1}
  Let $f,g$ be two semi-invariants. Then $\{f,g(a+\lambda x)\} = 0$.
  \end{statement}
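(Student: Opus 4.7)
The plan is to exploit the semi-invariant property of $f$ in order to reduce the shifted bracket to an expression involving the character $\chi_f$ evaluated on the differential of $g$, and then to use the previous proposition ($\{f,g\}=0$) to conclude that this expression must be identically zero.

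First I would set $q(x) \defeq g(a+\lambda x)$ and apply the chain rule in the form $dq(x) = \lambda \, dg(a+\lambda x) \in \g$. Since $f$ is a semi-invariant with character $\chi_f$, the defining property $\{f,q\} = \chi_f(dq)\, f$ gives
$$
\{f(x),\, g(a+\lambda x)\} \;=\; \lambda\, \chi_f\bigl(dg(a+\lambda x)\bigr)\, f(x).
$$
Next, I would invoke the preceding proposition which asserts that $\{f,g\}=0$ for any two semi-invariants. Rewriting this identity again via the semi-invariant property yields $\chi_f(dg(y))\, f(y) \equiv 0$ as a polynomial in $y \in \g^*$. Since $S(\g)$ is an integral domain and $f \ne 0$, we deduce $\chi_f(dg(y)) \equiv 0$ identically on $\g^*$. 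Substituting $y = a+\lambda x$ makes the right-hand side of the displayed equation vanish, finishing the proof.

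There is essentially no serious obstacle here: the argument is a two-line consequence of the semi-invariance of $f$ combined with the previous proposition. The only point requiring mild care is the correct application of the chain rule so that the argument of $\chi_f$ is $dg$ evaluated at the shifted point $a+\lambda x$, not at $x$; once this is in place, the identity $\chi_f \circ dg \equiv 0$ does all the work.
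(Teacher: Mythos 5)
Your proposal is correct and follows essentially the same route as the paper: both use the chain rule $\diff h(x) = \lambda\, \diff g(a+\lambda x)$ together with the semi-invariance identity $\{f,h\} = \chi_f(\diff h)f$, and both deduce $\chi_f(\diff g(y)) \equiv 0$ from the previous proposition $\{f,g\}=0$. Your explicit appeal to $S(\g)$ being an integral domain just makes precise a step the paper leaves implicit.
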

  \begin{proof}
 Since $\{f,g\} = 0$, we have $\chi_f(\diff g(x)) = 0$ for any $x$. Let $h(x) = g(a+\lambda x)$. Then 
 $$
 \diff h(x) = \lambda \diff f(a+\lambda x),
 $$
 so $\chi_f(\diff h(x)) = 0$, and $\{f,h\} = \chi_f(\diff h)f = 0$.
 
  \end{proof}
  \newpage
      \begin{statement}
  Let $f,g$ be two semi-invariants. Then $\{f,g(a+\lambda x)\}_a = 0$.
  \end{statement}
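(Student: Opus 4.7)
The plan is to reduce the claim to Proposition~\ref{shift1} via a short algebraic manipulation that links the frozen and the Lie-Poisson brackets applied to the shifted function $h(x)\defeq g(a+\lambda x)$. The starting point is the trivial identity, obtained by simply summing the Leibnitz expansions of the two brackets,
\begin{equation*}
\{f,h\}_a(x)+\lambda\,\{f,h\}(x)=\sum_{i,j,k}c_{ij}^k\,(a_k+\lambda x_k)\,\partial_i f(x)\,\partial_j h(x).
\end{equation*}
Setting $y\defeq a+\lambda x$ and using $\partial_j h(x)=\lambda\,(\partial_j g)(y)$, the right-hand side takes the form
\begin{equation*}
\lambda\,\sum_i \partial_i f(x)\,\Bigl(\,\sum_{j,k} c_{ij}^k\,y_k\,(\partial_j g)(y)\Bigr).
\end{equation*}

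The next step is to apply the semi-invariance of $g$. From $\{g,x_i\}=\chi_g(x_i)\,g$ together with the antisymmetry $c_{ij}^k=-c_{ji}^k$, one obtains the polynomial identity
\begin{equation*}
\sum_{j,k}c_{ij}^k\,y_k\,(\partial_j g)(y)=-\chi_g(x_i)\,g(y),
\end{equation*}
valid for every $y$. Plugging this in collapses the right-hand side of the previous display to $-\lambda\,g(y)\,\chi_g(\diff f(x))$.

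Finally, I would invoke the earlier proposition that any two semi-invariants commute: $0=\{f,g\}=-\{g,f\}=-\chi_g(\diff f)\,g$, so $\chi_g(\diff f)\equiv 0$ identically as a polynomial in $x$. Thus the identity above simplifies to $\{f,h\}_a(x)=-\lambda\,\{f,h\}(x)$, and the right-hand side vanishes by Proposition~\ref{shift1}, which completes the proof.

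The whole argument is a direct reduction to the already-established Lie-Poisson case; the only real obstacle is bookkeeping with sign conventions for the structure constants and keeping track of which character identity is applied to which argument.
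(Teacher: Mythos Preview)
Your proof is correct. Both your argument and the paper's begin from the same key identity
\[
\{f,h\}_a(x)+\lambda\,\{f,h\}(x)=\lambda\,\langle a+\lambda x,\,[\diff f(x),\diff g(a+\lambda x)]\rangle,
\]
but they finish differently. The paper substitutes $x\mapsto (x-a)/\lambda$ so that the right-hand side becomes $\lambda^2\{f((x-a)/\lambda),g\}(x)$, and then kills this with a second application of Proposition~\ref{shift1} (with the roles of $f$ and $g$ reversed and a different shift parameter). You instead evaluate the right-hand side directly via the semi-invariance of $g$, obtaining $-\lambda\,g(a+\lambda x)\,\chi_g(\diff f(x))$, and then invoke $\chi_g(\diff f)\equiv 0$, which follows immediately from the earlier proposition that any two semi-invariants Poisson-commute. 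Your route is slightly more economical---one use of Proposition~\ref{shift1} plus one character identity, and no change of variables---while the paper's route stays in the coordinate-free bracket formalism and reuses Proposition~\ref{shift1} symmetrically. Both are short and natural; there is no gap in yours.
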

  \begin{proof}
   We shall use the following explicit formulas for the Lie-Poisson and frozen argument bracket:
 $$
 \{f,g\}(x) = \langle x,[\diff f(x), \diff g(x)]\rangle, \quad  \{f,g\}_a(x) = \langle a, [\diff f(x), \diff g(x)]\rangle.
 $$
Let $h(x) = g(a+\lambda x)$. Then 
$$
\{f,h\}_a(x) = \langle a, [\diff f(x), \diff h(x)]\rangle = \lambda \langle a,[\diff f(x), \diff g(a+\lambda x)]\rangle. 
$$
On the other hand, 
$$
\lambda \langle x, [\diff f(x), \diff g(a+\lambda x)] \rangle=   \{f,h\}(x) = 0,
$$
so
 $$
\{f,h\}_a(x) =  \lambda \langle a+\lambda x,[\diff f(x), \diff g(a+\lambda x)]\rangle,
 $$
 and
  $$
\{f,h\}_a\left(\frac{x-a}{\lambda} \right) =  \lambda \langle x ,[\diff f\left(\frac{x-a}{\lambda} \right), \diff g(x)] \rangle = \lambda^2\{f\left(\frac{x-a}{\lambda} \right),g\}(x).
 $$
 The latter Poisson bracket vanishes by Proposition \ref{shift1}, so
 $$
 \{f,h\}_a\left(\frac{x-a}{\lambda} \right) = 0
 $$
 for any $x$, and hence $\{f,h\}_a = 0$.
  \end{proof}
  \begin{proof}[Proof of Theorem 1]
  Let $f,g$ be two semi-invariants. We need to prove that 
  $$\{f(a+\lambda x),g(a+\mu x)\} = \{f(a+\lambda x),g(a+\mu x)\}_a= 0.$$
  Let $h(x) = f(a+\lambda x), k(x) = g(a+\mu x)$. Then
  $$
  \{h,k\}(x) = \langle x,[\diff h(x), \diff k(x)]\rangle= \lambda \mu \langle x, [\diff f(a+\lambda x), \diff g(a+\mu x)]\rangle, 
  $$
  so
\begin{align*}
  \{h,k\}\left(\frac{x-a}{\lambda} \right)  &= \lambda \mu \langle \frac{x-a}{\lambda}, [\diff f(x), \diff g\left(a + \frac{\mu}{\lambda} (x-a)\right)]\rangle = \\ &= \lambda\{f, g\left(a + \frac{\mu}{\lambda} (x-a)\right)\}(x) - \lambda \{f, g\left(a + \frac{\mu}{\lambda} (x-a)\right)\}_a(x) = 0,
\end{align*}
and hence $\{h,k\} = 0$.
Analogously,
\begin{align*}
  \{h,k\}_a\left(\frac{x-a}{\lambda}\right)  = \lambda \mu \langle a ,[\diff f(x), \diff g\left(a + \frac{\mu}{\lambda} (x-a)\right)]\rangle = \lambda^2 \{f, g\left(a + \frac{\mu}{\lambda} (x-a)\right)\}_a(x) = 0,
\end{align*}
and hence $\{h,k\}_a = 0$.
  \end{proof}
  
  \section{Shifts of the fundamental semi-invariant}\label{shiftsSec}
  Recall that the shifts $p_1, \dots, p_n$ of the fundamental semi-invariant are defined by the formula:
  
$$
p_\g(a+\lambda x) =  \sum_{i=0}^{n} p_{i}(x)\lambda^i
$$
where $n = \deg p_\g$.
  Consider the factorization of $p_{\g}$ into irreducible factors:
  $$
  p_{\g} = p_1^{k_1}\dots p_m^{k_m},
  $$
  and let $d =\sum \deg p_i$.
  Let $x \in \g^*$, and consider the equation
  $$
  p_{\g}(x - \lambda a) = 0.
  $$
  Obviously, this equation has at most $d$ distinct roots.
  Let us say that an element $x \in \g^*$ is nice if it has the following properties:
  \begin{enumerate}
  \item the equation $p_{\g}(x - \lambda a) = 0$ has exactly $d$ distinct roots $\lambda_1, \dots, \lambda_d$;
    \item $\lambda_1, \dots, \lambda_d$ are locally analytic functions of $x$;
    \item for each $i$, the dimension of the stabilizer of ${x - \lambda_i(x)a}$ is locally constant;
  \item the line $x-\lambda a$ does not intersect the set $\Sing \setminus \Sing_0$.
  \end{enumerate}
  
  It is clear that the set $\mathcal N$ of nice elements is Zariski dense in $\g^*$.
  \newpage
  \begin{statement}\label{spandiff}
  Let $x \in \mathcal N$. Then the space spanned by the differentials of $\lambda_1, \dots, \lambda_d$ coincides with the space spanned by the differentials of $p_1, \dots p_n$.
  \end{statement}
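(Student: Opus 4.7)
The plan is to reduce the statement to an equivalent claim about the roots $\mu_i=-1/\lambda_i$ of $p_\g(a+\mu x)=0$, and then to extract these roots from the coefficients $p_i(x)$ by a logarithmic-derivative / partial-fractions identity.

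First, since $p_\g$ is homogeneous of degree $n$, one has $p_\g(a+\mu x)=\mu^n p_\g(x+a/\mu)$ for $\mu\neq 0$, so $\lambda$ is a root of $p_\g(x-\lambda a)=0$ if and only if $\mu=-1/\lambda$ is a root of $p_\g(a+\mu x)=0$. Since $a$ is regular, $p_\g(a)\neq 0$, so no such $\mu$ vanishes and the substitution is a local biholomorphism between the root sets. In particular $\mathrm{span}\{d\lambda_1,\dots,d\lambda_d\}=\mathrm{span}\{d\mu_1,\dots,d\mu_d\}$, so it suffices to show the latter equals $\mathrm{span}\{dp_1,\dots,dp_n\}$.

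Second, near a nice $x$ (which generically lies outside $\Sing_0$, since $\Sing_0$ is a hypersurface and $\mathcal N$ is Zariski dense), the polynomial $p_\g(a+\mu x)$ factors locally as
$$p_\g(a+\mu x)=p_\g(x)\prod_{i=1}^d\bigl(\mu-\mu_i(x)\bigr)^{K_i},\qquad \sum_{i=1}^d K_i=n,$$
where $K_i$ equals the multiplicity in $p_\g$ of the irreducible factor producing $\mu_i$. Differentiating the logarithm of both sides along a tangent vector $v\in T_x\g^*$ yields the identity of rational functions of $\mu$
$$\frac{\sum_{i=0}^n v(p_i)\,\mu^i}{p_\g(a+\mu x)}=\frac{v(p_\g)}{p_\g(x)}-\sum_{i=1}^d\frac{K_i\,v(\mu_i)}{\mu-\mu_i}.$$

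From this identity, both implications follow. If $v(p_i)=0$ for $i=1,\dots,n$, then (since $v(p_0)=0$ automatically because $p_0=p_\g(a)$ is constant) the left-hand side is identically zero; the distinctness of the $\mu_i$ at a nice point forces each simple-pole residue $-K_i v(\mu_i)$ on the right to vanish, giving $v(\mu_i)=0$. Conversely, if $v(\mu_i)=0$ for all $i$, the right-hand side reduces to the constant $v(p_\g)/p_\g(x)$, so the numerator polynomial $\sum v(p_i)\mu^i$ is a scalar multiple of $p_\g(a+\mu x)$; comparing $\mu^0$-coefficients and using $v(p_0)=0$ with $p_\g(a)\neq 0$ forces that scalar to vanish, whence $v(p_i)=0$ for all $i$. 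Dualizing gives $\mathrm{span}\{dp_i\}=\mathrm{span}\{d\mu_i\}$, and combining with the first step completes the proof.

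The main subtlety is the presence of multiple roots ($K_i\geq 2$), which would make naive implicit differentiation of $p_\g(a+\mu_i(x)\,x)=0$ degenerate, since $\partial_\mu p_\g(a+\mu x)$ vanishes at a multiple root. The logarithmic-derivative identity circumvents this by encoding each multiplicity $K_i$ as the coefficient of a simple pole, after which distinctness of the $\mu_i$ and uniqueness of partial-fraction decomposition give the result uniformly.
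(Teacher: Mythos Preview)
Your argument is correct for nice $x \notin \Sing_0$, and the partial-fractions device handles the multiplicities cleanly. There is, however, a small gap: the definition of $\mathcal N$ does not exclude $x\in\Sing_0$ (if $x$ lies on a single irreducible component of $\Sing_0$ and is otherwise generic, one of the $\lambda_i$ equals zero while the remaining niceness conditions hold), and in that case both the substitution $\mu_i=-1/\lambda_i$ and your factorization with leading coefficient $p_\g(x)$ break down. The easiest fix is to skip the substitution entirely and apply your logarithmic-derivative argument directly to
\[
p_\g(x-\lambda a)=p_\g(a)\prod_{i=1}^d(\lambda_i(x)-\lambda)^{s_i};
\]
since $a$ is regular, $p_\g(a)\neq 0$, so this works for every nice $x$. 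Homogeneity (which you already invoke) then tells you that the coefficients of $p_\g(x-\lambda a)$ in $\lambda$ are, up to sign and reversal of index, exactly $p_0,\dots,p_n$, so the same span is recovered.

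The paper's route is shorter and bypasses both the substitution and the partial-fraction bookkeeping: from the factorization above and homogeneity one gets
\[
p_\g(a+\lambda x)=p_\g(a)\prod_{i=1}^d\bigl(1+\lambda_i(x)\lambda\bigr)^{s_i},
\]
so the $p_j$ are, up to the constant $p_\g(a)$, the elementary symmetric polynomials of the $\lambda_i$ taken with multiplicities. Equality of the two spans is then the standard fact that $d e_1,\dots,d e_n$ and $d\lambda_1,\dots,d\lambda_d$ span the same space when the $\lambda_i$ are pairwise distinct (Newton's identities plus a Vandermonde determinant). Your partial-fractions identity is really the generating-function form of the same computation, so the two arguments are close cousins; the paper's framing just avoids the auxiliary variable $\mu$ and the unnecessary hypothesis $p_\g(x)\neq 0$.
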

 \begin{proof}
 Let $s_i$ be the multiplicity of $\lambda_i$. Then
  $$
    p_{\g}(x - \lambda a) =p_\g(a)\prod_{i=1}^d (\lambda_i(x) - \lambda)^{s_i},
  $$
  so
  $$
  p_{\g}(a+\lambda x) = \lambda^n p_{\g}(x + \frac{1}{\lambda}a) = p_\g(a)\prod_{i=1}^d (1 + \lambda_i(x)\lambda)^{s_i},
  $$
therefore the functions $p_1, \dots, p_n$ are, up to a constant factor, elementary symmetric polynomials of the functions $\lambda_1, \dots, \lambda_d$ taken with multiplicities, which easily implies the statement.
  \end{proof}
  Let $\g_i(x) = \g_{x - \lambda_i(x)a}$ be the stabilizer of $x - \lambda_i(x)a$.   The two following statements relate the differentials $\lambda_1, \dots, \lambda_d$ to the structure of stabilizers $\g_1(x), \dots, \g_d(x)$.
    \begin{statement}\label{preFundId}
  Let $x_0 \in \mathcal N$. Then $\diff \lambda_i(x_0) \in \g_{i}(x_0)$.
  \end{statement}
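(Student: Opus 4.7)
The plan is to exhibit $\diff \lambda_i(x_0)$, viewed as an element of $\g \cong (\g^*)^*$, as a scalar multiple of $\diff p_{(j)}(y_0)$ for a suitable irreducible factor $p_{(j)}$ of $p_\g$, where $y_0 := x_0 - \lambda_i(x_0) a$, and then to use semi-invariance of $p_{(j)}$ to place $\diff p_{(j)}(y_0)$ inside the stabilizer $\g_{y_0} = \g_i(x_0)$.

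First I would factor $p_\g = p_{(1)}^{k_1} \cdots p_{(m)}^{k_m}$ into irreducibles and locate the unique factor $p_{(j)}$ with $p_{(j)}(y_0) = 0$. The first niceness condition says that $p_\g(x_0 - \lambda a)$ has exactly $d = \sum \deg p_{(k)}$ distinct roots in $\lambda$, which forces each irreducible piece $p_{(k)}(x_0 - \lambda a)$ (of degree $\deg p_{(k)}$) to split into $\deg p_{(k)}$ distinct simple roots. In particular $\lambda_i(x_0)$ is a simple root of $p_{(j)}(x_0 - \lambda a)$, so $\langle a, \diff p_{(j)}(y_0) \rangle \neq 0$.

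Next, differentiating the local identity $p_{(j)}(x - \lambda_i(x) a) \equiv 0$ at $x = x_0$ yields
\[
\diff p_{(j)}(y_0) \;=\; \langle a, \diff p_{(j)}(y_0) \rangle \cdot \diff \lambda_i(x_0),
\]
so the two differentials are proportional. Finally, invoking the earlier proposition that irreducible factors of a semi-invariant are themselves semi-invariants, I get that $p_{(j)}$ is a semi-invariant with some character $\chi \in \g^*$, and the defining relation $\{p_{(j)}, f\} = \chi(\diff f)\, p_{(j)}$ rewrites as $\ad^*_{\diff p_{(j)}(y)}\, y = p_{(j)}(y)\cdot\chi$ in $\g^*$ for every $y$. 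Evaluated at $y = y_0$ it gives $\ad^*_{\diff p_{(j)}(y_0)}\, y_0 = 0$, so $\diff p_{(j)}(y_0) \in \g_{y_0}$, and hence $\diff \lambda_i(x_0) \in \g_i(x_0)$.

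The main obstacle is the one that a naive approach stumbles over: differentiating $p_\g(x - \lambda_i(x) a) = 0$ directly is degenerate whenever $\lambda_i(x_0)$ has multiplicity $s_i > 1$ as a root of $p_\g(x_0 - \lambda a)$, because then both $p_\g(y_0) = 0$ and $\langle a, \diff p_\g(y_0) \rangle = 0$ and implicit differentiation yields no information. Descending to the irreducible factor $p_{(j)}$, where the root is always simple under the niceness assumption, is what unlocks the implicit function theorem and makes the semi-invariance argument applicable.
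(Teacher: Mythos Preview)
Your argument is correct and complete, but it follows a genuinely different route from the paper's. The paper gives a short geometric proof: it takes a curve $y(t)$ inside the coadjoint orbit through $y_0 = x_0 - \lambda_i(x_0)a$, sets $x(t) = y(t) + \lambda_i(x_0)a$, observes that $y(t) \in \Sing$ (stabilizer dimension is constant along coadjoint orbits) and hence $\lambda_i(x(t)) \equiv \lambda_i(x_0)$, and differentiates to get $\langle \xi, \diff\lambda_i(x_0)\rangle = 0$ for every $\xi$ tangent to the orbit. Since the annihilator of $T_{y_0}\mathcal O$ in $\g$ is precisely $\g_{y_0}$, the conclusion follows. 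Your approach instead isolates the irreducible factor $p_{(j)}$ of $p_\g$ vanishing at $y_0$, uses niceness to guarantee that $\lambda_i$ is a \emph{simple} root of $p_{(j)}(x_0-\lambda a)$ so that implicit differentiation applies, and then invokes the semi-invariance of $p_{(j)}$ (established earlier in the paper) to conclude $\diff p_{(j)}(y_0)\in\g_{y_0}$. The paper's argument is more self-contained and conceptual, bypassing any need to factor $p_\g$ or to quote the propositions on semi-invariants. Your argument, on the other hand, yields the bonus of an explicit proportionality $\diff p_{(j)}(y_0) = \langle a,\diff p_{(j)}(y_0)\rangle\,\diff\lambda_i(x_0)$, ties the result directly to the algebraic machinery of Section~\ref{pt1}, and your closing remark nicely explains why one must pass to an irreducible factor rather than differentiate $p_\g$ itself.
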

  \begin{proof}
Consider the coadjoint orbit $\mathcal O$ passing through $y_0 = x_0 - \lambda_i(x_0)a$, and let $\xi \in \T_{y_0}\mathcal O $ be a vector tangent to the orbit at $y_0$. Let also $y(t)$ be a curve such that $y(t) \in \mathcal O, y(0) = y_0,$ and $\dot y(0) = \xi$. Let also $x(t) =  y(t) + \lambda_i(x_0)a$. Obviously, $y(t) \in \Sing$, and $\lambda_i(x(t)) = \lambda_i(x_0)$. Differentiating this formula with respect to $t$ at $t = 0$, we obtain $$\langle \dot x(0), \diff \lambda_i(x_0) \rangle = 0,$$ and since $\dot x(0) = \dot y(0) = \xi$, we have
\begin{align}\label{ann}
 \langle \xi, \diff \lambda_i(x_0) \rangle = 0.
\end{align}
 Since \eqref{ann} is true for any $\xi \in \T_{y_0}\mathcal O $,  we have \begin{align*}\langle  \T_{y_0}\mathcal O , \diff \lambda_i(x_0) \rangle = 0,\end{align*} which implies that $\diff \lambda_i(x_0) \in \g_{y_0} = \g_i(x_0)$.
  \end{proof}
  The following simple formula is of fundamental significance for the present paper.
  \begin{statement}\label{fundIdP}
  Let $x_0 \in \mathcal N$, and let $\xi, \eta \in \g_{i}(x_0)$. Then
\begin{align}\label{fundId}
  [\xi, \eta] = \langle a,  [\xi, \eta] \rangle \, \diff \lambda_i(x_0).
\end{align}
  \end{statement}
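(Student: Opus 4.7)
The plan is to extend $\xi$ and $\eta$ to smooth local sections of the stabilizer bundle along the curve $x \mapsto y_i(x)$, and then differentiate a tautologically vanishing scalar function. Concretely, put $y(x) := x - \lambda_i(x)\,a$ and choose smooth $\g$-valued functions $\xi(x), \eta(x)$ defined in a neighborhood of $x_0$ in $\g^*$ such that $\xi(x), \eta(x) \in \g_{y(x)}$ and $\xi(x_0) = \xi$, $\eta(x_0) = \eta$. Such an extension exists because clause (iii) of the definition of $\mathcal{N}$ forces $\dim \g_{y(x)}$ to be locally constant near $x_0$, so the stabilizers form a smooth subbundle of the trivial bundle $\g^* \times \g$, and any vector in a given fibre extends to a smooth local section.

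With this in place, I would consider the scalar function
\[ g(x) \,:=\, \langle y(x),\ [\xi(x), \eta(x)] \rangle, \]
which vanishes identically: for any $y \in \g^*$ and any $\xi' \in \g_y$ one has $\langle y, [\xi', \eta']\rangle = \langle \ad^*_{\xi'}(y), \eta'\rangle = 0$, applied with $y = y(x)$ and $\xi' = \xi(x)$.

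The conclusion then follows by differentiating $g \equiv 0$ at $x_0$ in an arbitrary tangent direction $\zeta \in T_{x_0}\g^* \simeq \g^*$. The Leibniz rule produces three contributions; the two involving the derivatives $\dot\xi(x_0)$, $\dot\eta(x_0)$ both vanish because $\xi, \eta \in \g_{y_0}$, since for instance
\[ \langle y_0,\ [\dot\xi(x_0), \eta]\rangle \,=\, -\langle \ad^*_\eta(y_0),\ \dot\xi(x_0)\rangle \,=\, 0. \]
Only the contribution $\langle \dot y_0, [\xi, \eta]\rangle = 0$ survives, and substituting $\dot y_0 = \zeta - \langle \zeta, \diff\lambda_i(x_0)\rangle\,a$ yields
\[ \langle \zeta, [\xi, \eta]\rangle \,=\, \langle \zeta, \diff\lambda_i(x_0)\rangle \,\langle a, [\xi, \eta]\rangle \qquad \text{for all } \zeta \in \g^*, \]
which is precisely the identity \eqref{fundId}.

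The only delicate step is the first one, namely the smooth extendability of $\xi, \eta$ to local sections of the stabilizer bundle. This is exactly what clause (iii) of niceness supplies; without local constancy of $\dim \g_{y(x)}$ the auxiliary function $g$ would not even be well defined as a smooth object on a neighborhood of $x_0$. Everything else reduces to a single Leibniz differentiation combined with the coadjoint-skew identity.
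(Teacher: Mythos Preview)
Your proof is correct and follows essentially the same approach as the paper: extend $\xi,\eta$ to local sections of the stabilizer bundle using the local constancy of $\dim \g_{y(x)}$ guaranteed by condition~(iii) of niceness, and differentiate the identically vanishing function $\langle x-\lambda_i(x)a,\ [\xi(x),\eta(x)]\rangle$ at $x_0$. The paper's write-up is simply terser, omitting the explicit Leibniz expansion and the verification that the $\dot\xi,\dot\eta$ terms drop out, which you have spelled out carefully.
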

  \begin{proof}
 Choose a neighborhood $U(x_0) \ni x_0$ such that the dimension of $\g_{i}(x)$ is constant in $U(x_0)$. Then it is possible to define smooth mappings $\xi, \eta \colon U(x_0) \to \g$ such that
  $\xi(x_0) = \xi, \eta(x_0) = \eta$, and $\xi(x), \eta(x) \in \g_{i}(x)$
  for each $x \in U(x_0)$. Differentiating the identity
  $$
  \langle x - \lambda_i(x)a, [\xi(x), \eta(x)]\rangle = 0.
  $$
at $x = x_0$, we obtain \eqref{fundId}.
  \end{proof}
  \section{Linear algebra related to a pair of skew-symmetric forms}\label{LA}
  Let $P_0, P_\infty$ be two skew-symmetric forms on a vector space $V$, and let
  $$
  P_\lambda = P_0 - \lambda P_\infty.
  $$
  Let also
\begin{align*}
 r =  \min_{\lambda \in \overline \Complex} \corank P_\lambda, \quad
  \Lambda = \{ \lambda \in \overline \Complex \mid \corank P_\lambda> r \}, \quad
  L = \sum_{\lambda \in \overline \Complex \setminus \Lambda} \Ker P_\lambda.
\end{align*}
  \begin{statement}\label{LA2}
The space $L$ has the following properties:
\begin{enumerate}
       \item it is isotropic with respect to any form $P_\lambda$;
       \item  the skew-orthogonal complement to $L$ given by $L^\bot = \{ \xi  \in V \mid P_\lambda(\xi,L) = 0\}$ does not depend on the choice of $\lambda \in \overline\Complex$;
       \item if $\lambda \notin \Lambda$, then $P_\lambda$ is non-degenerate on $L^\bot /L$;
       \item if $\lambda \in \Lambda$, then $\dim\, (\Ker P_\lambda \cap L) = r$;
        \item if $\lambda \in \Lambda$, and $\alpha \in \overline \Complex$, then	$
	 \Ker \left( P_\alpha \mid_{\Ker P_\lambda}  \right) \supset \Ker P_\lambda \cap L.
	$

	\end{enumerate}
	\newpage
\end{statement}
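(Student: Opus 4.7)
The plan hinges on the identity
\[ P_\lambda(\xi,\eta) \;=\; (\mu-\lambda)\,P_\infty(\xi,\eta), \qquad \xi \in \Ker P_\mu,\ \mu \in \Complex, \]
which is immediate from the definition $P_0\xi = \mu P_\infty \xi$ of $\Ker P_\mu$ (with $\mu = \infty$ handled trivially by $P_\infty \xi = 0$). From it I would extract two key facts. First, for $\xi_i \in \Ker P_{\mu_i}$ with $\mu_1 \neq \mu_2$, applying the identity once with $\xi_1$ and once with $\xi_2$ gives $(\mu_1 - \mu_2)P_\infty(\xi_1,\xi_2) = 0$ and hence $P_\lambda(\xi_1,\xi_2) = 0$ for all $\lambda$ (cases with $\mu_i = \infty$ go through similarly). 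Second, for $\mu_0 \in \overline\Complex \setminus \Lambda$ I would pick an analytic family $\xi(\mu) \in \Ker P_\mu$ near $\mu_0$ and differentiate $P_\mu(\eta, \xi(\mu)) = 0$ at $\mu_0$; pairing with any $\eta \in \Ker P_{\mu_0}$ yields $P_\infty(\eta, \xi(\mu_0)) = P_{\mu_0}(\eta, \dot\xi(\mu_0)) = 0$, so each regular kernel is isotropic with respect to all $P_\lambda$. Combining these proves (1).

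To prove (2) I would note that for $\xi_\mu \in \Ker P_\mu$ with $\mu \notin \{\infty, \lambda\}$, the identity reduces the condition $P_\lambda(\eta, \xi_\mu) = 0$ to the $\lambda$-independent condition $P_\infty(\eta, \xi_\mu) = 0$. The only nuisance is the excluded value $\mu = \lambda$ when $\lambda \notin \Lambda$; to dispose of it I would use that $\mu \mapsto \Ker P_\mu$ is a rational map on $\overline\Complex \setminus \Lambda$. Taking an analytic basis of $\Ker P_\mu$ near $\lambda$, the finite-dimensional subspace $\sum_{\mu \neq \lambda,\, \mu \notin \Lambda}\Ker P_\mu$ is closed, so it contains $\lim_{\mu \to \lambda}\Ker P_\mu = \Ker P_\lambda$. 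Thus $L$ is unchanged when the value $\mu = \lambda$ is omitted, the conditions defining $L^\bot_\lambda$ become $\lambda$-free (the contribution of $\mu = \infty$, if $\infty \notin \Lambda$, is handled by $P_\infty \xi_\infty = 0$), and (2) follows.

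Next I would prove (4) by a dimension count: the pairing $P_\lambda \colon V \times L \to \Complex$ has left kernel $L^\bot$ and right kernel $\Ker P_\lambda \cap L$, yielding
\[ \dim(\Ker P_\lambda \cap L) \;=\; \dim L + \dim L^\bot - \dim V. \]
By (2) the right-hand side is independent of $\lambda$, and evaluating at any $\lambda \notin \Lambda$ (where $\Ker P_\lambda \subset L$ has dimension $r$) pins the value at $r$. For (3) I would observe that $\Ker P_\lambda \subset L^\bot$ tautologically, so the analogous formula applied to the pairing $P_\lambda \colon V \times L^\bot \to \Complex$ gives $\dim(L^\bot)^\bot_\lambda = \dim L - r + \dim \Ker P_\lambda$, which equals $\dim L$ when $\lambda \notin \Lambda$. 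Combined with $L \subset (L^\bot)^\bot_\lambda$, this forces $(L^\bot)^\bot_\lambda = L$, so $P_\lambda$ descends to a non-degenerate form on $L^\bot/L$.

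Finally I would prove (5) as follows. For $\lambda \in \Lambda$ and $\xi \in \Ker P_\lambda \cap L$, write $\xi = \sum_i c_i \xi_{\mu_i}$ with $\mu_i \in \overline\Complex \setminus \Lambda$ (so in particular $\mu_i \neq \lambda$) and $\xi_{\mu_i} \in \Ker P_{\mu_i}$. For any $\eta \in \Ker P_\lambda$ one has $P_\lambda(\eta, \xi_{\mu_i}) = 0$; for $\mu_i \neq \infty$ the key identity rewrites this as $(\mu_i - \lambda)P_\infty(\eta, \xi_{\mu_i}) = 0$, forcing $P_\infty(\eta, \xi_{\mu_i}) = 0$, while for $\mu_i = \infty$ this vanishing is automatic. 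Summing, $P_\infty(\eta, \xi) = 0$, and hence $P_\alpha(\eta, \xi) = P_\lambda(\eta, \xi) + (\lambda - \alpha)P_\infty(\eta, \xi) = 0$ for every $\alpha \in \overline\Complex$. The only genuinely delicate step in the whole argument is the closedness/Vandermonde step used in (2) to remove the exceptional value $\mu = \lambda$ from the sum defining $L$; every other step is a direct manipulation of the defining identity.
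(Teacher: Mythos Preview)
Your argument is correct. The paper itself does not prove this proposition: it simply refers the reader to \cite{sbs} and remarks that the statements can be deduced from the Jordan--Kronecker canonical form for a pair of skew-symmetric forms. So there is no ``paper's own proof'' to compare against in detail.

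What you do differently is avoid the Jordan--Kronecker classification entirely. Everything is extracted from the single identity $P_\lambda(\xi,\eta)=(\mu-\lambda)P_\infty(\xi,\eta)$ for $\xi\in\Ker P_\mu$, together with one limiting/continuity step (your ``closedness'' argument showing that $\Ker P_\lambda$ for regular $\lambda$ is already contained in the span of the other regular kernels). This buys you a self-contained, basis-free proof that works uniformly, whereas the Jordan--Kronecker route gives all five items at once by inspection of the normal form but requires importing that classification. Your dimension-count proofs of (3) and (4) via the left/right kernels of the pairings $P_\lambda\colon V\times L\to\Complex$ and $P_\lambda\colon V\times L^\bot\to\Complex$ are clean and make the role of item (2) transparent. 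The only places that deserve a sentence of extra care in a write-up are the treatment of $\mu=\infty$ and $\lambda=\infty$ (which you flag), and the existence of the analytic family $\xi(\mu)$ through a given vector of a regular kernel (which follows from constancy of $\dim\Ker P_\mu$ near a regular $\mu_0$); neither is a gap.
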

Assume that $\infty \notin \Lambda$, and define the \textit{recursion} operator $R \colon L^\bot / L \to L^\bot L$ by the formula $$R = P_\infty^{-1}P_0.$$
\begin{statement} \label{LA3}The operator $R$ has the following properties:
	\begin{enumerate}
	\item the spectrum of $R$ coincides with the set $\Lambda$; the multiplicity of each eigenvalue is at least two;
	\item the $\lambda$-eigenspace of $R$ coincides with the space
	$
		(\Ker P_{\lambda}) / (\Ker P_{\lambda} \cap L)
	$;
	\item the eigenspaces of $R$ are pairwise orthogonal with respect to $P_\lambda$ for each $\lambda$;
	\item the operator $R$ is diagonalizable if and only if for each $\lambda \in \Lambda$, the following identity holds
	$$
	\dim \Ker \left( P_\infty \mid_{\Ker P_\lambda}  \right) = r.
	$$
	\end{enumerate}
\end{statement}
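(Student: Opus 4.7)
The plan is to descend everything to the quotient $L^\bot/L$, on which the induced form $\bar P_\infty$ is non-degenerate (property~3 of Statement~\ref{LA2}), so that $R - \lambda\Id = \bar P_\infty^{-1}\bar P_\lambda$ is a well-defined operator. For parts~(1) and~(2) I would first check the descent of the forms $P_\lambda$ to $L^\bot/L$: it is well defined because $L$ is isotropic for every $P_\lambda$ (property~1), and the inclusion $\Ker P_\lambda \subset L^\bot$ is immediate from the very definition of $L^\bot$. A short computation using the standard identity $(L^\bot)^{\bot_{P_\lambda}} = L + \Ker P_\lambda$ then yields
$$
\Ker \bar P_\lambda \;=\; \Ker P_\lambda / (\Ker P_\lambda \cap L),
$$
and property~4 of Statement~\ref{LA2} simultaneously identifies the spectrum of $R$ with $\Lambda$ and proves~(2). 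The multiplicity-at-least-two assertion I would deduce from the identity $\det(R - \lambda\Id) = \det(\bar P_\infty)^{-1}\, \mathrm{Pf}(\bar P_\lambda)^2$, which exhibits the characteristic polynomial of $R$ as a perfect square.

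For~(3), the relation $\bar P_\infty R = \bar P_0$ together with skew-symmetry of both forms rewrites as $\bar P_\infty R = R^\top \bar P_\infty$; that is, $R$ is $\bar P_\infty$-self-adjoint, and consequently $\bar P_\lambda$-self-adjoint for every $\lambda$. The pairwise $\bar P_\lambda$-orthogonality of eigenspaces associated to distinct eigenvalues is then the one-line standard consequence.

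The heart of the argument is~(4). Since $R - \lambda\Id$ is self-adjoint with respect to the non-degenerate form $\bar P_\infty$, one has $\mathrm{Im}(R - \lambda\Id) = \Ker(R - \lambda\Id)^{\bot_{\bar P_\infty}}$; hence $R$ is diagonalizable if and only if, for each $\lambda \in \Lambda$, the form $\bar P_\infty$ restricts non-degenerately to the eigenspace $\Ker \bar P_\lambda$. The main technical point, which I expect to be the chief obstacle, is translating this non-degeneracy back to $V$: using the identification $\Ker \bar P_\lambda = \Ker P_\lambda / (\Ker P_\lambda \cap L)$ and the fact that the quotient form is induced by $P_\infty|_{\Ker P_\lambda}$, non-degeneracy becomes the inclusion $\Ker(P_\infty|_{\Ker P_\lambda}) \subset \Ker P_\lambda \cap L$; the reverse inclusion is exactly property~5 of Statement~\ref{LA2} with $\alpha = \infty$, and property~4 converts the resulting equality of subspaces into the dimension condition $\dim \Ker(P_\infty|_{\Ker P_\lambda}) = r$.
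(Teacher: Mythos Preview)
Your argument is correct. The paper itself does not prove this proposition: it simply refers the reader to \cite{sbs} and remarks that the statement can also be read off from the Jordan--Kronecker normal form for a pair of skew-symmetric forms. So there is no ``paper's own proof'' to match against; what you have written is a genuine self-contained proof where the paper offers only citations.

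Compared with the Jordan--Kronecker route the paper suggests, your approach is more intrinsic. The classification theorem would hand you a simultaneous block decomposition of $(P_0,P_\infty)$ into Kronecker blocks (which build $L$ and $L^\bot$) and Jordan blocks (which live on $L^\bot/L$ and carry the eigenvalues of $R$); all four items then become a matter of inspection on each block. Your argument instead works directly with the quotient $L^\bot/L$, exploiting only the $\bar P_\infty$-self-adjointness of $R$ and the identities $\Ker(R-\lambda\Id)=\Ker\bar P_\lambda$ and $\mathrm{Im}(R-\lambda\Id)=\Ker(R-\lambda\Id)^{\bot_{\bar P_\infty}}$. The payoff is that you never invoke a classification theorem, and the logical dependence on Statement~\ref{LA2} (especially items~4 and~5) is made completely transparent; the cost is that one must check the kernel identity $\Ker\bar P_\lambda=\Ker P_\lambda/(\Ker P_\lambda\cap L)$ by hand via $(L^\bot)^{\bot_{P_\lambda}}=L+\Ker P_\lambda$, which the normal form would give for free. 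Both routes are short; yours is arguably the cleaner one for a reader who does not already have Jordan--Kronecker at hand.
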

For the proof of Propositions \ref{LA2}, \ref{LA3}, see \cite{sbs}. They can also be easily deduced from the Jordan-Kronecker theorem \cite{Gurevich,Thompson, GZ2, Kozlovj}.
\begin{statement}\label{LA4}
Let $\Lambda = \{\lambda_1, \dots, \lambda_k\}$, and assume that $\xi_i \in \Ker P_{\lambda_i}$. Let
$$
U = L + \langle \xi_1, \dots, \xi_k \rangle.
$$
Then
\begin{enumerate}
\item $U$ is isotropic with respect to $P_\lambda$ for any $\lambda \in \overline \Complex$;
\item if $\lambda \notin \Lambda$, then $U$ is maximal isotropic with respect to $P_\lambda$ if and only if all eigenvalues of $R$ have multiplicity two, and $\xi_i \notin L$ for each $i$.
\end{enumerate}
\end{statement}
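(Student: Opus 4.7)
The plan is to establish part (a) by expanding $P_\lambda$ on pairs of generators of $U = L + \langle \xi_1, \dots, \xi_k \rangle$ and checking each type vanishes, and then to derive part (b) from a dimension count that exploits the non-degeneracy of $P_\lambda$ on $L^\bot/L$ for $\lambda \notin \Lambda$.

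For (a), it suffices by bilinearity to verify that $P_\lambda$ vanishes on each pair of generators. The restriction to $L \times L$ is zero by Proposition \ref{LA2}(a). Since $\xi_i \in \Ker P_{\lambda_i}$, we have $P_{\lambda_i}(\xi_i, L) = 0$, i.e.\ $\xi_i \in L^\bot$; as $L^\bot$ is $\lambda$-independent by Proposition \ref{LA2}(b), this yields $P_\lambda(\xi_i, L) = 0$ for every $\lambda$. The diagonal pairings $P_\lambda(\xi_i, \xi_i)$ vanish by skew-symmetry. For $i \ne j$, Proposition \ref{LA3}(b) places the classes $[\xi_i]$ and $[\xi_j]$ in the distinct eigenspaces of $R$ on $L^\bot/L$ corresponding to $\lambda_i \ne \lambda_j$, and these eigenspaces are $P_\lambda$-orthogonal by Proposition \ref{LA3}(c); hence $P_\lambda(\xi_i, \xi_j) = 0$.

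For (b), fix $\lambda \notin \Lambda$. Any maximal $P_\lambda$-isotropic subspace of $V$ has dimension $\frac{1}{2}(\dim V + r)$, since $\corank P_\lambda = r$ there. Using that $\Ker P_\lambda \subset L$ together with the standard identity $\dim L^\bot = \dim V - \dim L + \dim(L \cap \Ker P_\lambda)$, I obtain $\dim(L^\bot/L) = \dim V - 2\dim L + r$, so the maximal isotropy condition for $U$ is equivalent to $\dim(U/L) = \frac{1}{2}\dim(L^\bot/L)$. By Proposition \ref{LA3}(b), those $\xi_i$ with $\xi_i \notin L$ project to nonzero eigenvectors of $R$ with pairwise distinct eigenvalues $\lambda_i$, hence are linearly independent in $L^\bot/L$; so $\dim(U/L) = N$, where $N = |\{i : \xi_i \notin L\}|$.

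The argument closes by comparing $N$ with $\frac{1}{2}\dim(L^\bot/L)$. Trivially $N \le k$; on the other hand, Proposition \ref{LA3}(a) says every eigenvalue of $R$ has multiplicity at least two, so $\dim(L^\bot/L) \ge 2k$. Combined, $N \le k \le \frac{1}{2}\dim(L^\bot/L)$, and equality throughout, which is precisely the condition for $U$ to be maximal isotropic, holds iff all $\xi_i$ lie outside $L$ (forcing $N = k$) and every eigenvalue of $R$ has multiplicity exactly two (forcing $k = \frac{1}{2}\dim(L^\bot/L)$). The only place where I expect needing care is the bookkeeping behind $\dim L^\bot$ and the linear independence of the $[\xi_i]$'s; both follow cleanly from the structural results in Propositions \ref{LA2} and \ref{LA3}, so no genuinely new difficulty should arise.
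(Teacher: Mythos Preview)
Your proof is correct and follows exactly the route the paper indicates: the paper simply states that the result ``easily follows from Propositions \ref{LA2} and \ref{LA3}'' without giving details, and your argument is a clean and accurate execution of that plan, using isotropy of $L$, $\lambda$-independence of $L^\bot$, orthogonality of eigenspaces, and the dimension/multiplicity bookkeeping from those two propositions.
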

The proof easily follows from Propositions \ref{LA2} and \ref{LA3}.
\section{Proof of Theorem 2}
Assume that $ \Sing_{\mathfrak b}$ is dense in $\Sing_0$, and prove that the extended Mischenko-Fomenko subalgebra $\widetilde{\mathcal F_a}$ is complete. Let us take $x \in \mathcal N$ such that $x - \lambda_i(x)a \in  \Sing_{\mathfrak b}$ for each $i$, and prove that the space
$$
\diff \widetilde{\mathcal F}(x) = \{ \diff f(x) \mid f \in  \widetilde{\mathcal F}(x)\}
$$
has dimension
 $$
b(\g) =  \frac{1}{2}(\dim \g + \ind \g),
$$
which immediately implies the completeness of $\widetilde{\mathcal F_a}$.\par
Consider skew-symmetric forms $P_0 = \mathcal A_x$ and $P_\infty = \mathcal A_a$ on the cotangent space $\T^*_x \g^* \simeq \g$ which are given by
 $$
A_x(\xi, \eta) = \langle x, [\xi,\eta] \rangle, \quad   A_a(\xi, \eta) = \langle a,[\xi,\eta]\rangle.
$$
Let us apply the results of Section \ref{LA} to these two forms. We note that
\begin{align*}
r = \mathrm{ind}\, \g, \quad \Ker P_\lambda = \g_{x - \lambda a}, \quad \Lambda = \{\lambda_1(x), \dots, \lambda_d(x) \}.
\end{align*}
The following lemma is due to Bolsinov \cite{Bolsinov}.
\begin{lemma}
Let $\mathcal F$ be the classical Mischenko-Fomenko subalgebra, and let
$$
\diff {\mathcal F}(x) = \{ \diff f(x) \mid f \in  {\mathcal F}(x)\}.
$$
Then
$$
\diff {\mathcal F}(x) = L.
$$
\end{lemma}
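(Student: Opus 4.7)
The plan is to establish the two inclusions $\diff \mathcal{F}(x) \subseteq L$ and $L \subseteq \diff \mathcal{F}(x)$ separately. Both rely on two basic facts: for each coadjoint invariant $f_i$ and each regular $y \in \g^*$, $\diff f_i(y) \in \g_y$, with the $m = \ind \g$ covectors $\diff f_1(y),\ldots,\diff f_m(y)$ spanning $\g_y$; and stabilizers are scale-invariant, so $\g_{a + \lambda x} = \g_{x - (-1/\lambda)a} = \Ker P_{-1/\lambda}$. Also, $\mu \in \Lambda$ precisely when $x - \mu a \in \Sing$.

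For $\diff \mathcal{F}(x) \subseteq L$, differentiate the defining expansion $f_i(a + \lambda x) = \sum_j f_{ij}(x)\lambda^j$ in $x$ to obtain the identity
\[
\lambda \cdot \diff f_i(a + \lambda x) \;=\; \sum_j \diff f_{ij}(x)\,\lambda^j.
\]
For small nonzero $\lambda$, the value $-1/\lambda$ lies outside the finite set $\Lambda$, so the left-hand side lies in $\g_{a + \lambda x} = \Ker P_{-1/\lambda} \subseteq L$. Since $L$ is a closed linear subspace and this inclusion holds throughout a punctured neighborhood of $\lambda = 0$, extracting Taylor coefficients gives $\diff f_{ij}(x) \in L$ for all $i,j$. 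These covectors span $\diff \mathcal{F}(x)$, yielding the inclusion.

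For $L \subseteq \diff \mathcal{F}(x)$, fix $\mu \notin \Lambda$ and set $\lambda = -1/\mu$. The point $y = x - \mu a$ is regular, so $\g_y = \Ker P_\mu$ is spanned by $\diff f_1(y),\ldots,\diff f_m(y)$. By scale invariance, $\g_y = \g_{a + \lambda x}$, and the differential identity above exhibits each $\diff f_i(a + \lambda x)$ as a linear combination of the generators $\diff f_{ij}(x)$ of $\diff \mathcal{F}(x)$. Hence $\Ker P_\mu \subseteq \diff \mathcal{F}(x)$, and summing over all $\mu \notin \Lambda$ yields $L \subseteq \diff \mathcal{F}(x)$.

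The main technical point to address carefully is that in Brailov's formulation the invariants $f_i$ are only analytic in a neighborhood of $a$, so a priori the expansion $\sum_j f_{ij}(x)\lambda^j$ converges only for $\lambda$ near $0$. The polynomials $f_{ij}$ are however globally defined on $\g^*$, and for $x \in \mathcal{N}$ the complex line $\{a + \lambda x : \lambda \in \Complex\}$ meets $\Sing$ only at the discrete set $\{-1/\mu : \mu \in \Lambda\}$. This permits analytic continuation of $f_i(a + \lambda x)$ along the line and legitimizes the use of the differential identity at the value $\lambda = -1/\mu$ required for the second inclusion; this is the step where the ``nice'' hypothesis on $x$ is genuinely used.
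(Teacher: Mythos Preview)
The paper does not prove this lemma; it merely attributes it to Bolsinov \cite{Bolsinov}. So there is no ``paper's own proof'' to compare against, and your argument must stand on its own.

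Your first inclusion is clean and correct. The second inclusion, however, rests on the analytic continuation of the $f_i$'s along the affine line $\{a+\lambda x\}$, and this step is not adequately justified. Two issues: (i) the punctured complex line $\Complex\setminus\{-1/\mu:\mu\in\Lambda\}$ is not simply connected, so even if local Casimirs continue along paths in the regular set (which itself needs an argument), monodromy may prevent a well-defined value of $\diff f_i(a+\lambda x)$ at $\lambda=-1/\mu$; (ii) even granting a continuation, you use without proof that the continued differentials remain linearly independent and hence span $\g_{a+\lambda x}$ at the target point.

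Both problems disappear if you avoid continuation altogether. You already have, for every $\lambda$ in a genuine neighbourhood of $0$, that $\Ker P_{-1/\lambda}=\g_{a+\lambda x}\subseteq \diff\mathcal F(x)$, since there the $f_i$'s are defined and independent. Now observe that the assignment $\mu\mapsto \Ker P_\mu$ is a regular morphism from the connected curve $\overline\Complex\setminus\Lambda$ to the Grassmannian $\mathrm{Gr}(r,\g)$, and the condition $\Ker P_\mu\subseteq \diff\mathcal F(x)$ is Zariski-closed. Since it holds on an infinite (indeed Euclidean-open) set of $\mu$'s, it holds for all $\mu\notin\Lambda$; equivalently, $L$ is already spanned by the kernels $\Ker P_\mu$ with $\mu$ near $\infty$ (this also follows directly from the Jordan--Kronecker normal form). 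Either way you get $L\subseteq\diff\mathcal F(x)$ without touching analytic continuation, and the ``nice'' hypothesis on $x$ is needed only to ensure $\Lambda$ is finite.
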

As follows from Proposition \ref{spandiff}, $$\diff \widetilde{\mathcal F}(x) = \diff {\mathcal F}(x) + \langle \diff \lambda_1(x), \dots, \diff \lambda_d(x) \rangle.$$ By Proposition \ref{preFundId}, we have $$\diff \lambda_i(x) \in \g_{i}(x) = \Ker P_{\lambda_i},$$ so we can use Proposition \ref{LA4} to show that $\diff \widetilde{\mathcal F}(x) $ is maximal isotropic with respect to $A_a$ and hence is of dimension $b(\g)$. In order to do this, we need to show that the eigenvalues of the recursion operator $R$  have multiplicity two, and that $\diff \lambda_i(x) \notin L$.\par
Since $x - \lambda_i(x)a \in  \Sing_{\mathfrak b}$, we have
$
\dim \g_i =r + 2,
$
and by item 4 of Proposition \ref{LA2}, $\dim\left( \g_i \cap L \right) = r$, so $$\dim \left( \g_i / ( \g_i \cap L) \right)= 2,$$
and all eigenspaces of $R$ are two-dimensional (see Proposition \ref{LA3}, item 2). Therefore, to prove that all eigenvalues of $R$ have multiplicity two, we need to show that $R$ has no Jordan blocks.
By {item 4} \\ of Proposition \ref{LA3}, it suffices to prove that
$
\dim \Ker (A_a \mid \g_i) = r. 
$
We have
$
\dim \g_i =r + 2,
$
and
$
\dim \g_i \cap L = r.
$
By item 5 of Proposition \ref{LA2}, $ \Ker (A_a \mid \g_i) \supset \g_i \cap L$, so $\dim \Ker (A_a \mid \g_i)$ can be either $r$ or $r+2$. Assume that it is $r+2$. Then $ \Ker (A_a \mid \g_i) = \g_i$, and $A_a \mid \g_i = 0$. By Proposition \ref{fundIdP}, this implies that $\g_i$ is Abelian, which is not the case.\par
Now, let us prove that $\diff \lambda_i(x) \notin L$. Since $ \Ker (A_a \mid \g_i) \supset \g_i \cap L$, Proposition \ref{fundIdP} implies that $ \g_i \cap L$ lies in the center $\mathcal Z({\g_i})$. So, if $\diff \lambda_i(x) \in L$, then
$$
\diff \lambda_i(x) \in \mathcal Z({\g_i}).
$$
On the other hand, since $\g_i$ is not Abelian, Proposition \ref{fundIdP} implies that 
$$
\diff \lambda_i(x) \in [\g_i, \g_i], \quad \diff \lambda_i(x) \neq 0,
$$ 
where $ [\g_i, \g_i]$ is the derived subalgebra of $\g_i$. But since $\g_i \simeq \mathfrak b_2 \oplus \mbox{Abelian}$, we have $[\g_i, \g_i] \cap  \mathcal Z({\g_i}) = 0$, so $\diff  \lambda_i(x) \notin L$, which completes the proof of the if-part of the theorem. The proof of the only-if-part is analogous.

\section{Acknowledgements}
The author is grateful to Alexey Bolsinov for useful comments. This work was supported by the Dynasty Foundation Scholarship.
\bibliographystyle{unsrt} 
\bibliography{gasm} 
\end{document}